\documentclass[11pt,oneside,a4paper,reqno]{amsart}

\usepackage[utf8]{inputenc}
\usepackage[T1]{fontenc}
\usepackage[english]{babel}
\usepackage[dvipsnames,svgnames]{xcolor}
\usepackage{tikz}

\usepackage{lmodern}
\usepackage{newtxtext} 

\usepackage[
activate={true,nocompatibility},
final,
tracking=true,
spacing=true,
stretch=15,
shrink=15
]{microtype}
\microtypecontext{spacing=nonfrench}
\SetTracking{encoding=*,family=*}{-20}

\usepackage[
a4paper,
margin=1.1in,
top=1.2in,
bottom=1.2in,
headheight=15pt,
headsep=0.4in,
footskip=1.2cm
]{geometry}
\usepackage{enumerate}
\definecolor{linkcol}{HTML}{B02D10}
\definecolor{winfull}{HTML}{7B2D3A}
\definecolor{wintrade}{HTML}{C99AA2}
\definecolor{hlred}{HTML}{9B2C2C}
\definecolor{rowgray}{gray}{0.95}

\usepackage{amsmath,amssymb,amsthm}
\usepackage{bm}
\newcommand{\revadd}[1]{{\color{black}#1}}

\usepackage{hyperref}
\hypersetup{
	hidelinks,
	colorlinks=false,
	citecolor=linkcol,
	linkcolor=linkcol,
	urlcolor=linkcol,
	breaklinks=true
}

\numberwithin{equation}{section}

\theoremstyle{plain}
\newtheorem{theorem}{Theorem}[section]
\newtheorem{lemma}[theorem]{Lemma}

\newtheorem{corollary}[theorem]{Corollary}

\theoremstyle{definition}
\newtheorem{definition}[theorem]{Definition}
\newtheorem{example}[theorem]{Example}

\theoremstyle{remark}
\newtheorem{remark}[theorem]{Remark}

\newcommand{\R}{\mathbb{R}}

\newcommand{\kernel}{\mathcal{K}}

\newcommand{\sphere}{\mathbb{S}^{n-1}}
\DeclareMathOperator{\diam}{diam}
\DeclareMathOperator{\esssup}{ess\,sup}
\DeclareMathOperator{\essinf}{ess\,inf}
\newcommand{\abs}[1]{\left\lvert#1\right\rvert}
\newcommand{\norm}[1]{\left\lVert#1\right\rVert}
\newcommand{\dd}{\mathrm{d}}
\newcommand{\mean}[1]{\langle #1\rangle_{\Omega}}
\newcommand{\mymean}[1]{\langle #1\rangle}
\newcommand{\aev}{\textnormal{a.e.}}

\title[Fractional modular Poincar\'e--Wirtinger inequality]
{A fractional modular Poincar\'e--Wirtinger-type inequality\\
	in Sobolev spaces with variable exponents}

\author{G.~Di Fratta}
\address{{\textsc{Giovanni Di Fratta.}}  $^1$Dipartimento di Matematica e Applicazioni ``R.~Caccioppoli'',
	Universit\`a degli Studi di Napoli Federico II}
\email{giovanni.difratta@unina.it}

\author{A.~Fiorenza}
\address{{\textsc{Alberto Fiorenza.}} $^1$Dipartimento di Architettura, Universit\`a degli Studi di Napoli Federico II, Via Monteoliveto 3, 80134 Napoli, Italy.
	$^2$Istituto per le Applicazioni del Calcolo ``Mauro Picone'', Sezione di Napoli, Consiglio Nazionale delle Ricerche, Via Pietro Castellino 111, 80131 Napoli, Italy}
\email{fiorenza@unina.it}

\author{L.~Happ}
\address{{\textsc{Leon Happ.}} Departamento de Matem\'aticas, 
	Universidad Aut\'onoma de Madrid, 
	C/ Francisco Tom\'as y Valiente 7,
	28049 Madrid, Spain}
\email{leon.happ@uam.es}

\date{\today}

\subjclass[2020]{Primary 46E35; Secondary 26D10, 26A33, 42B25}
\keywords{Variable exponent Sobolev spaces, fractional Poincar\'e--Wirtinger
	inequality, Gagliardo seminorm, Muckenhoupt weights, modular inequalities.}

\begin{document}
	
	\begin{abstract}
		Let $\Omega\subset\R^n$ be a bounded Lipschitz domain and let
		$p:\Omega\to[1,\infty)$ be a measurable, essentially bounded exponent;
		no regularity of $p$ -- in particular, no log-H\"older continuity -- is
		assumed. For a general non-negative measurable kernel $\kernel(x,y)$
		admitting a radial bounding function $\kappa(x,r)$ in the second
		variable, we establish the fractional modular
		Poincar\'e--Wirtinger-type inequality
		\[
			\iint_{\Omega\times\Omega}\kernel(x,y)\,\abs{u(x)-u(y)}^{p(x)}\dd y\,\dd x
			\leqslant C\int_\Omega c(x)\left(\int_\Omega
			\frac{\abs{\nabla u(z)}^{p(x)}}{\abs{z-x}^{n-1+\theta}}\,\dd z\right)\dd x,
		\]
		valid for every $u\in W^{1,1}(\Omega)$, in particular every
		$u\in W^{1,p(\cdot)}(\Omega)$, and every
		$\theta\in[0,1)$. Here
		\[
		c(x)=\int_0^{D}r^{n+p(x)-2+\theta}\kappa(x,r)\,\dd r,
		\]
		$D$ is the diameter
		of any fixed ball containing $\overline\Omega$, and the constant $C$ depends
		only on $n,p^+,\theta$ and the fixed domain $\Omega$.
		For the fractional kernel $\kernel(x,y)=\abs{x-y}^{-n-\lambda p(x)}$ we
		obtain a variable-exponent Gagliardo-modular estimate, while the
		endpoint $\theta=0$ with $\kernel\equiv 1$ recovers -- and extends from
		smooth functions to the full Sobolev class -- the modular
		Poincar\'e--Wirtinger inequality recently obtained in
		\cite{DDFH2024}. The proof replaces the starshaped chain decompositions
		used there by a combination of a Morrey-type estimate on starshaped
		domains, a positive-kernel proof of weighted Poincar\'e inequalities,
		and one reflection extension for the weights
		$\abs{z-x}^{-(n-1+\theta)}$. The constants are uniform down to
		exponent $1$, and the inequalities are understood in the extended
		nonnegative sense when either side is infinite.
	\end{abstract}
	
	\maketitle

	\section{Introduction}\label{sec:intro}
	
	For a bounded Lipschitz domain $\Omega\subset\R^n$ and $1\leqslant p<\infty$, the classical Poincar\'e--Wirtinger inequality is given by
	\begin{equation}\label{eq:classical-PW}
		\norm{u-\mean{u}}_{L^p(\Omega)}
		\leqslant C\norm{\nabla u}_{L^p(\Omega)},\qquad
		\mean{u}=\frac1{\mathcal{L}^n(\Omega)}\int_\Omega u\,\dd z.
	\end{equation}
	Here and throughout, a domain is defined as an open and connected set. The constant $C$ depends exclusively on the domain $\Omega$ and the exponent $p$. For a comprehensive background, we refer the reader to \cite{Brezis2011,Ziemer1989}. Furthermore, for discussions regarding the specific Poincar\'e inequality applied to functions that vanish on the domain's boundary, along with other related extensions, see \cite{DiFrattaFiorenza2022, DiFrattaFiorenza2025}.
	
	A closely related estimate concerning the fractional Gagliardo seminorm is given by 
	\begin{equation}\label{eq:gagliardo-classical}
		\iint_{\Omega\times\Omega}
		\frac{\abs{u(x)-u(y)}^p}{\abs{x-y}^{n+\lambda p}}\,\dd y\,\dd x
		\leqslant C\int_\Omega\abs{\nabla u}^p\,\dd z,
		\qquad 0<\lambda<1.
	\end{equation}
	These fractional Sobolev seminorms, along with their fundamental connections to first-order energies, form the cornerstone of the Bourgain--Brezis--Mironescu theory \cite{BBM2001,DavoliDiFrattaGiorgioPinamonti2025,DavoliDiFrattaPagliari2026}. 

	{

{\color{black}
In the variable-exponent setting, the modular $\rho_{p(\cdot)}(u)=\int_\Omega\abs{u(x)}^{p(x)}\dd x$ and the associated Luxemburg norm give distinct formulations of these estimates; see Section~\ref{subsec:VLS}. A modular inequality with a constant independent of $u$ implies a corresponding norm inequality by rescaling (see e.g. \cite{Fiorenza2021}), whereas the converse generally fails. Thus modular estimates can require stronger assumptions; this distinction is discussed for the maximal operator in \cite[p.~301]{CruzUribeDiFrattaFiorenza2018}.

Norm inequalities are treated in \cite{DHHR2011,CruzUribeFiorenza2013,CiarletDinca2011,KaufmannRossiVidal2017,BahrouniRadulescu2018}. Log-H\"older continuity is a common sufficient hypothesis, but is not universally necessary. For example, \cite{CruzUribeLopezGarciaOjea2026} proves Poincar\'e and Sobolev inequalities under a boundary log-H\"older condition and a weaker interior oscillation condition that permits everywhere discontinuous exponents.

Our purpose is to prove a fractional modular estimate for arbitrary measurable, essentially bounded exponents $p\geqslant1$ and general nonnegative kernels on bounded Lipschitz domains. The absence of regularity assumptions is accompanied by a different right-hand side: the gradient at $z$ is raised to $p(x)$, rather than $p(z)$. The result therefore does not assert that the usual modular Poincar\'e inequality requires weaker hypotheses than its norm counterpart. Its Luxemburg-gauge consequence is recorded in Remark~\ref{rem:luxemburg-consequence}.
}

		The main result of this paper is the following fractional modular Poincar\'e--Wirtinger-type inequality.
	}

	\begin{theorem}\label{thm:main-intro}
		Let $\Omega\subset\R^n$ be a bounded Lipschitz domain, and consider an open ball $B$ containing $\overline\Omega$ with diameter $D=\diam(B)$. Assume $p:\Omega\to[1,\infty)$ is a measurable exponent such that 
		\begin{equation}1\leqslant p^-:=\essinf_\Omega p\leqslant p^+:=\esssup_\Omega p<\infty,
		\end{equation}
		and fix a parameter $\theta\in[0,1)$.
		
		Suppose the nonnegative measurable kernel $\kernel:\R^n\times\R^n\to[0,\infty)$ admits a measurable radial bounding function $\kappa:\Omega\times(0,\infty)\to[0,\infty)$, in the sense that
		\begin{equation}
			\kernel(x,y)\leqslant\kappa(x,\abs{x-y})
			\quad\text{for \aev{} }x\in\Omega\text{ and \aev{} }y\in\R^n.
		\end{equation}
		Then, there exists a constant $C=C(n,\Omega,p^+,\theta)>0$ such that every $u\in W^{1,1}(\Omega)$---and in particular every $u\in W^{1,p(\cdot)}(\Omega)$---satisfies the following inequality:
		\begin{equation}\label{eq:intro-main}
			\iint_{\Omega\times\Omega}\kernel(x,y)
			\abs{u(x)-u(y)}^{p(x)}\,\dd y\,\dd x
			\leqslant C\int_\Omega c(x)\left(\int_\Omega
			\frac{\abs{\nabla u(z)}^{p(x)}}{\abs{z-x}^{n-1+\theta}}\,\dd z
			\right)\dd x,
		\end{equation}
		where
		\begin{equation}
			c(x)=\int_0^D r^{n+p(x)-2+\theta}\kappa(x,r)\,\dd r.
		\end{equation}
		This estimate is understood in the sense of extended nonnegative integrals, adopting the convention $0\cdot\infty=0$. Consequently, finiteness of the right-hand side guarantees finiteness of the left-hand side.
	\end{theorem}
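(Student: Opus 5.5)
The plan is to reduce the statement, for each fixed $x$, to a pointwise estimate of the form
\[
\int_{\Omega} \abs{u(x)-u(y)}^{p}\,\kernel(x,y)\,\dd y \;\leqslant\; C\,c(x)\int_\Omega \frac{\abs{\nabla u(z)}^{p}}{\abs{z-x}^{n-1+\theta}}\,\dd z ,\qquad p=p(x),
\]
with $C$ uniform in $p\in[1,p^+]$. Once this holds, integrating in $x$ gives \eqref{eq:intro-main}, and the variable exponent becomes harmless: for fixed $x$ the exponent is a single number. This is why no regularity of $p$ is needed. I would first prove the estimate for $u\in C^\infty(\overline\Omega)$, which is dense in $W^{1,1}(\Omega)$ for Lipschitz $\Omega$. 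I would then pass to general $u$ by Fatou's lemma on the left and by a.e. convergence on the right. The right side needs some care: one should take approximants whose gradients converge in a way compatible with the weight. The cleanest route is to first reduce to the case where the right side is finite and then apply Fatou to subsequences.

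For the pointwise estimate, the model case is $\Omega$ convex, or starshaped with respect to a ball. There I would use the Morrey-type segment representation
\[
\abs{u(x)-u(y)}\leqslant\int_0^{\abs{x-y}}\abs{\nabla u(x+t\omega)}\,\dd t,\qquad \omega=\frac{y-x}{\abs{y-x}}.
\]
Raising to the power $p$ and applying Jensen/Hölder on the segment gives $\abs{u(x)-u(y)}^p\leqslant r^{p-1}\int_0^r\abs{\nabla u(x+t\omega)}^p\,\dd t$ with $r=\abs{x-y}$. Then:
\begin{itemize}
\item bound $\kernel$ by $\kappa(x,r)$ and pass to polar coordinates $y=x+r\omega$, which contributes $r^{n-1}$;
\item swap the $r$ and $t$ integrals;
\item use $t\leqslant r\leqslant D$ to write $1\leqslant (r/t)^{n-1+\theta}$, that is, $t^{-(n-1+\theta)}r^{n-1+\theta}\geqslant 1$.
\end{itemize}
The result is
\[
\int_0^D r^{n-1}\,r^{p-1}\,r^{n-1+\theta}\,t^{-(n-1+\theta)}\,\kappa\,\dd r .
\]
This exponent overcounts. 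Keeping the factor $r^{n-1}$ from polar coordinates, the correct bookkeeping is
\[
r^{p-1}\,r^{n-1}\leqslant r^{p+n-2+\theta}\,t^{-(n-1+\theta)}\,t^{n-1}\,r^{-(1-\theta)}\cdots .
\]
I therefore expect the careful version to be the following. Use $\abs{\nabla u(x+t\omega)}^p\,t^{n-1}\,\dd t\,\dd\omega=\abs{\nabla u(z)}^p\,\dd z$ and pair $r^{n-1}\cdot r^{p-1}\leqslant r^{n+p-2+\theta}\,t^{-\theta}\cdot t^{\theta}r^{-\theta}$. This produces exactly $c(x)$ times $\int\abs{\nabla u(z)}^p\abs{z-x}^{-(n-1+\theta)}\,\dd z$, with the factor $t^{-(n-1)}t^{-\theta}$ converting $\dd t\,\dd\omega$ into $\dd z/\abs{z-x}^{n-1+\theta}$ and the leftover $(t/r)^\theta\leqslant 1$. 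So in the convex case the constant is $1$ and is independent of $p$.

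For a general Lipschitz domain the segment $[x,y]$ may leave $\Omega$, and this is the main obstacle. My plan is to use a bounded extension $E:W^{1,1}(\Omega)\to W^{1,1}(B)$ built from finitely many bi-Lipschitz reflections across boundary charts, extending $u$ to the convex ball $B$. The convex argument applied on $B$ then gives the bound with $\int_B\abs{\nabla Eu(z)}^p\abs{z-x}^{-(n-1+\theta)}\,\dd z$, and this must be controlled by the same integral over $\Omega$.

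To do this, I would use $\abs{\nabla Eu(z)}\leqslant L\abs{\nabla u(\Phi(z))}$ for the reflection $\Phi$ on $B\setminus\Omega$ (with a partition of unity introducing lower-order terms $\abs{u}$, which must be removed by first subtracting a mean). The key geometric fact is that for $x\in\Omega$ and reflected points, $\abs{\Phi(z)-x}\lesssim\abs{z-x}$. This holds because the reflection moves points at most comparably to their distance from $\partial\Omega$, which is at most $\abs{z-x}$. Consequently the weight $\abs{z-x}^{-(n-1+\theta)}$ transfers with a constant depending on the Lipschitz character and on $n$ and $\theta$. The factor $L^p\leqslant L^{p^+}$ explains the dependence of $C$ on $p^+$.

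The lower-order terms from the cutoffs are the delicate part. I would handle them by replacing the global extension with the starshaped-piece approach the abstract alludes to: cover $\Omega$ by finitely many domains starshaped with respect to balls, and prove the weighted Poincaré inequality $\int\abs{u-u_{\Omega}}^p\,w\lesssim\int\abs{\nabla u}^p\,w$ for $w=\abs{\cdot-x}^{-(n-1+\theta)}$. The estimate must be uniform in $x$, using that $w$ is an $A_1$ weight with constant depending only on $n$ and $\theta$. That step absorbs the cutoff terms and completes the general case.
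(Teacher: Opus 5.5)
Your architecture matches the paper's. You subtract the mean, extend once by reflection to $B$, and run the segment estimate on the ball; your corrected bookkeeping with $(t/r)^\theta\leqslant1$ gives exactly $c(x)$ with constant $1$, as in Lemma~\ref{lem:morrey}. You then transfer the weight through $\abs{R_jz-x}\leqslant L\abs{z-x}$ and absorb the cutoff terms by a weighted Poincar\'e inequality for the $A_1$ power weights. Two steps, however, do not work as written.

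\textbf{The density step.} To pass from $C^\infty(\overline\Omega)$ to $W^{1,1}(\Omega)$ you need $\limsup_k\mathrm{RHS}(u_k)\leqslant\mathrm{RHS}(u)$. Fatou's lemma gives only the opposite inequality, $\mathrm{RHS}(u)\leqslant\liminf_k\mathrm{RHS}(u_k)$, so ``Fatou on subsequences'' cannot close the argument. Moreover, $W^{1,1}$ convergence gives no control of the mixed modular $\int_\Omega c(x)\int_\Omega\abs{\nabla u_k(z)}^{p(x)}\abs{z-x}^{-(n-1+\theta)}\,\dd z\,\dd x$. Getting that control would require mollifying an extension, using exactly the uniform weighted bounds you have not yet proved, plus dominated convergence in $x$. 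The step is also unnecessary. Apply the segment estimate directly to $\tilde v=E(u-\mean{u})\in W^{1,1}(\R^n)$, using absolute continuity on almost every ray. The only care needed is that the full-measure set of base points be chosen before the exponent enters, so that you may later set $q=p(x)$. Lemma~\ref{lem:ACL-radial} arranges this by Tonelli over $(x,r,\sigma)$.

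\textbf{The weighted Poincar\'e inequality.} This is the more substantive gap: the inequality carries the whole Lipschitz case, and you only assert it. You need it uniformly in the pole $x\in\Omega$ and in the exponent $q\in[1,p^+]$, including $q=1$, because you apply it with $q=p(x)$. Fixed-$q$ weighted theory does not give this uniformity by itself; for instance, the $L^q(w)$ norm of the maximal operator blows up as $q\downarrow1$. Your starshaped cover, or the paper's convex patches, together with the $L^1$ Poincar\'e inequality, does yield the pointwise bound $\abs{v-\mean{v}}\leqslant A_\Omega I_1[\abs{\nabla v}]$. The missing idea is the positive-kernel step that follows.
\begin{enumerate}
\item Jensen's inequality for the finite measure $\abs{z-y}^{1-n}\dd y$, whose mass is at most $\abs{\sphere}\diam(\Omega)$, gives $(I_1f)^q\leqslant(\abs{\sphere}\diam(\Omega))^{q-1}I_1[f^q]$.
\item Tonelli's theorem moves $I_1$ onto the weight.
\item For $w\in A_1$, $I_1[w]\leqslant c_n\diam(\Omega)\,Mw\leqslant c_n\diam(\Omega)\,Kw$.
\end{enumerate}
After these steps, the $q$-dependence sits only in factors such as $A_\Omega^q$ and $(\abs{\sphere}\diam(\Omega))^{q-1}$, which are bounded for $q\leqslant p^+$.

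\textbf{Keep the single extension.} If ``replacing the global extension with the starshaped-piece approach'' means gluing the kernel estimate itself over pieces, that fails for singular kernels. Splitting $u(x)-u(y)$ through an intermediate point $a$ produces $\abs{u(x)-u(a)}^{p(x)}\int\kernel(x,y)\,\dd y$, which is infinite whenever the piece contains a neighbourhood of $x$.
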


	{
		To clarify the scope, structural properties, and immediate consequences of Theorem~\ref{thm:main-intro}, several remarks are in order.
		
		\paragraph*{\textit{Minimal regularity and finiteness}}
		The formulation in $W^{1,1}(\Omega)$ highlights the minimal regularity actually required by the proof, strengthening the result since $W^{1,1}(\Omega)$ is a larger space that contains $W^{1,p(\cdot)}(\Omega)$ on bounded domains. However, it is important to note that membership in the smaller space $W^{1,p(\cdot)}(\Omega)$ alone does not guarantee a finite fractional energy. Because the right-hand side evaluates the gradient $\nabla u(z)$ raised to a ``frozen'' exponent $p(x)$ rather than $p(z)$, this structural mismatch can cause the integral to diverge. Example~\ref{ex:infinite-modular} illustrates this distinction, and Remark~\ref{rem:finite-rhs} provides a sufficient condition for the finiteness of the right-hand side.\smallskip
		
		\paragraph*{\textit{Dependence of the constant}}
		
\revadd{The constant $C$ is} independent of the containing ball $B$, the specific kernel, and the exact point-wise values of the exponent $p$, depending on $p$ only through its essential supremum. 
\revadd{Its dependence on the fixed domain $\Omega$ is essential.} This dependence is governed by the size of $\Omega$, a fixed finite Lipschitz atlas characterizing its boundary, and its classical unweighted $L^1$ Poincar\'e constant. Consequently, the theorem does not assert a uniform bound over families of domains with uncontrolled or degenerating global geometry.\smallskip
		
		\paragraph*{\textit{Translation invariance}}
		Both sides of \eqref{eq:intro-main} are invariant under the addition of constants. Consequently, if the inequality holds for a function $u$, it equally holds for $u+c$ for any constant $c \in \R$. In particular, this allows one to assume, without loss of generality, that $u$ is a null-average (zero-mean) function whenever convenient.\smallskip
		
		\paragraph*{\textit{Fractional Gagliardo bounds}}
		When applying the main theorem to the fractional kernel $\kernel(x,y)=\abs{x-y}^{-n-\lambda p(x)}$ with $0<\lambda<1$, we require the parameters to satisfy the sufficient condition
		\begin{equation}\label{eq:intro-compat}
			\theta\in[0,1),\qquad \theta>1-(1-\lambda)p^-.
		\end{equation}
		Under this assumption, we obtain the variable-exponent Gagliardo-modular bound:
		\begin{equation}\label{eq:intro-gagliardo}
			\iint_{\Omega\times\Omega}
			\frac{\abs{u(x)-u(y)}^{p(x)}}{\abs{x-y}^{n+\lambda p(x)}}\,\dd y\,\dd x
			\overset{\text{Corollary~\ref{cor:variable-gagliardo}}}{\leqslant} C'\int_\Omega\int_\Omega
			\frac{\abs{\nabla u(z)}^{p(x)}}{\abs{z-x}^{n-1+\theta}}\,\dd z\,\dd x.
		\end{equation}
		The restriction in \eqref{eq:intro-compat} naturally arises because the coefficient $c(x)$ relies on the radial integral $\int_0^D r^{(1-\lambda)p(x)+\theta-2}\dd r$. For this integral to converge at the origin, the exponent of $r$ must be strictly greater than $-1$, a requirement globally ensured by the infimum $p^-$. Crucially, the admissible range $[0,1)\cap(1-(1-\lambda)p^-,\infty)$ is nonempty for every $\lambda\in(0,1)$; moreover, if $(1-\lambda)p^->1$, one can simply choose $\theta=0$. However, we make no claim that \eqref{eq:intro-compat} is strictly necessary if alternative methods are employed, nor do we assert that the constants remain bounded in the limit as $\lambda\uparrow1$.\smallskip
		
		\paragraph*{\textit{Recovery of the modular Poincar\'e--Wirtinger inequality}}
		Alternatively, selecting the trivial kernel $\kernel\equiv1$ alongside $\theta=0$ allows us to apply Jensen's inequality to deduce:
		\begin{equation}\label{eq:intro-modularPW}
			\int_\Omega\abs{u(x)-\mean{u}}^{p(x)}\,\dd x
			\overset{\text{Corollary~\ref{cor:modularPW}}}{\leqslant} C''\int_\Omega\int_\Omega
			\frac{\abs{\nabla u(z)}^{p(x)}}{\abs{z-x}^{n-1}}\,\dd z\,\dd x.
		\end{equation}
		
\revadd{This specialization recovers} the modular Poincar\'e--Wirtinger inequality recently established in \cite{DDFH2024}, and extends its validity from smooth functions to the entire variable-exponent Sobolev space $W^{1,p(\cdot)}(\Omega)$.\smallskip
		
		\paragraph*{\textit{Unbounded convex domains}}
		Finally, if the domain $\Omega$ is convex, the basic estimate naturally extends to unbounded sets. For instance, using the regularized kernel $(\mu+\abs{x-y}^2)^{-q}$ with $\mu>0$, the radial coefficient $c(x)$ remains uniformly bounded as long as the integrability condition $2q>n+p^+-1+\theta$ is met (see Corollary~\ref{cor:regularised}).
	}

	\subsection{Earlier work}\label{subsec:sota}
	
	The foundational theory of variable-exponent Lebesgue and Sobolev spaces (particularly regarding their Luxemburg norms and classical norm-based Poincar\'e inequalities) is comprehensively covered in \cite{DHHR2011,CruzUribeFiorenza2013,CiarletDinca2011}. However, attempts to directly translate these norm bounds into a naive modular analogue fail in general. Specifically, the inequality
	\begin{equation}\label{eq:naive-modular}
		\int_\Omega\abs{u-\mean{u}}^{p(x)}\,\dd x
		\leqslant C\int_\Omega\abs{\nabla u}^{p(x)}\,\dd x
	\end{equation}
	cannot hold universally for an arbitrary measurable exponent. This structural obstruction was already observed in the context of the Dirichlet problem by Fan, Zhang, and Zhao \cite{FanZhangZhao2005} (related Dirichlet modular inequalities for $\abs{u}\leqslant1$, under additional structural assumptions on $p$, are proved in \cite{DiFrattaFiorenza2022}) and a specific counterexample for the mean-subtracted inequality was provided in \cite[Example~2.1]{DDFH2024}. The core issue is that the modular of the function and the modular of its gradient can scale at fundamentally different rates, as demonstrated by the following elementary example.
	
	\begin{example}\label{ex:naive-fails}
		Consider the interval $\Omega=(0,2)$ and fix constants $1\leqslant p_1<p_2$. Define a step-function exponent such that $p(x)=p_1$ on $(0,1)$ and $p(x)=p_2$ on $(1,2)$. For the function $u(x)=\min\{x,1\}$, the mean over the interval is $\mean{u}=3/4$. Introducing a scaling factor $\lambda>0$, we compute the respective modulars:
		\begin{equation}\label{eq:scaling-counterexample}
			\int_0^2\abs{(\lambda u)'(x)}^{p(x)}\,\dd x=\lambda^{p_1},\qquad
			\int_0^2\abs{\lambda u(x)-\mean{\lambda u}}^{p(x)}\,\dd x \geqslant (\lambda/4)^{p_2}.
		\end{equation}
		Since $p_2 > p_1$, the ratio of the oscillation modular to the gradient modular diverges to infinity as $\lambda\to\infty$. Consequently, no uniform, $u$-independent constant $C$ can satisfy \eqref{eq:naive-modular} for this exponent.
	\end{example}

	To circumvent the failure of the naive modular inequality, various substitutes involving additional terms or modified hypotheses have been proposed in the literature, as discussed in \cite[Section~8.2]{DHHR2011} and \cite{Maeda2008}. Related modular phenomena concerning the maximal operator and Hardy--Poincar\'e inequalities have also been explored in \cite{CruzUribeDiFrattaFiorenza2018,DiFrattaFiorenza2022}. More recently, the authors of \cite{DDFH2024} established a modular estimate of the form \eqref{eq:intro-modularPW} for merely measurable exponents $p$; however, their formulation was restricted to smooth functions. Their proof relied on decomposing the domain into a finite chain of starshaped subdomains to produce explicit geometric constants, successfully recovering the classical inequality when $p$ is constant.
	
	In the fractional setting, Poincar\'e inequalities with constant exponents have been studied extensively, with notable contributions in \cite{Ponce2004,HurriSyrjanenVahakangas}. Correspondingly, variable-exponent fractional Sobolev spaces and their associated nonlocal equations are investigated in \cite{KaufmannRossiVidal2017,BahrouniRadulescu2018}. 

\revadd{The main contribution is the general-kernel fractional estimate, obtained from a single reflection extension whose weighted bounds are uniform in the pole and in the frozen exponent down to $1$. The underlying potential and weighted estimates are classical in nature; their uniformity allows their use for merely measurable $p$ without a density assumption. The resulting modular is \emph{asymmetric}.} In our formulation, the exponent $p(x)$ is ``frozen'' at the first integration variable. Crucially, the right-hand sides of our inequalities retain this frozen exponent $p(x)$ on the gradient $\nabla u(z)$. As a result, they are structurally distinct from, and do not provide bounds in terms of, the standard gradient modular $\int_\Omega\abs{\nabla u(z)}^{p(z)}\dd z$.

\revadd{Two distinct difficulties arise in passing to Lipschitz domains. The first concerns the singularity of the kernel. In Lemmas~3.4 and~3.5 of \cite{DDFH2024}, differences are split through an intermediate point and then averaged over a fixed starshaped core or overlap. With a kernel $\kernel(x,y)$, a direct repetition produces terms involving $\abs{u(x)-u(a)}^{p(x)}\int_D\kernel(x,y)\,\dd y$. For the fractional kernels considered here, this last integral is infinite whenever $D$ contains a neighbourhood of $x$, although the radial moment $c(x)$ can be finite. The splitting thus loses the vanishing difference at the diagonal that compensates for the singularity, so the previous gluing argument does not directly yield the present estimate. To retain the original endpoint difference, we instead extend the function to a containing ball and apply the segment estimate there.}

\revadd{The second difficulty is to control the extended gradient in the resulting weighted integrals. An unweighted extension estimate is insufficient, and weighted estimates available separately for each fixed exponent do not by themselves establish the required uniformity. We need a single extension operator, independent of $x$ and $q$, with bounds uniform for $x\in\Omega$ and $q\in[1,p^+]$, including $q=1$, before setting $q=p(x)$. Obtaining such bounds from general weighted extension theorems requires tracking their constants as both the exponent and the weight vary. We therefore give a direct reflection construction tailored to the translated power weights $\abs{z-x}^{-(n-1+\theta)}$, for which this dependence can be controlled explicitly; see Lemma~\ref{lem:weighted-tools} and Corollary~\ref{cor:gradient-extension}.}

	\subsection*{Organization of the paper} The paper is organized as follows. Section~\ref{sec:prelim} fixes our functional conventions and establishes the rigorous almost-everywhere segment formula. Section~\ref{sec:morrey} deduces the core Morrey-type radial estimate, while 
\revadd{Section~\ref{sec:convex} applies it to convex domains. Section~\ref{sec:weighted} establishes the uniform weighted Poincar\'e and extension estimates on Lipschitz domains.} Finally, Section~\ref{sec:main} synthesizes these tools to prove the main theorem and deduces its primary corollaries.

	\section{Preliminaries}\label{sec:prelim}
	Throughout this paper, we assume $n\geqslant1$. By a domain, we mean a nonempty, open, and connected subset of $\R^n$. We write $\mathcal{L}^k$ for $k$-dimensional Lebesgue measure and $\mathcal{H}^{n-1}$ for surface measure. In integrals, $\dd x$ and $\dd r$ abbreviate $\dd\mathcal{L}^n(x)$ and $\dd\mathcal{L}^1(r)$, respectively. The characteristic function of a measurable set $A$ is denoted by $1_A$, and matrix transposes by $A^{\mathsf{T}}$. All balls are Euclidean, $\sphere$ is the unit sphere in $\R^n$, and $\abs\sphere:=\mathcal{H}^{n-1}(\sphere)$, with the convention that $\abs{\mathbb{S}^0} = 2$ when $n=1$. Furthermore, we consider exclusively real-valued functions, and all gradients are to be understood in the weak sense. Constants denoted by $c_n$ depend only on $n$ and may change from line to line.
	
	Integrals of nonnegative measurable functions are permitted to take the value $+\infty$, and we adopt the standard measure-theoretic convention that $0\cdot\infty=0$. Because the values of integrals are insensitive both to the choice of representatives on Lebesgue null sets and to the values of kernels or weights on the diagonal, a singular kernel may be assigned the value $0$ on the diagonal without loss of generality. Finally, unless otherwise specified, any property asserted for almost every (\aev{}) point of a measurable set refers to the ambient Lebesgue measure $\mathcal{L}^n$.
	
	\subsection{Variable-exponent spaces} \label{subsec:VLS}
	Given a measurable exponent $p:\Omega\to[1,\infty)$ satisfying $p^+<\infty$, we define the modular and its associated Luxemburg norm as follows:
	\begin{equation} \label{eq:modular}
		\rho_{p(\cdot)}(f)=\int_\Omega\abs{f(x)}^{p(x)}\,\dd x,
		\qquad
		\norm f_{p(\cdot)}=
		\inf\{t>0:\rho_{p(\cdot)}(f/t)\leqslant1\}.
	\end{equation}
	The variable-exponent Lebesgue space $L^{p(\cdot)}(\Omega)$ is comprised of all measurable functions for which the Luxemburg norm---or equivalently, the modular---is finite. We then define the corresponding Sobolev space by
	\begin{equation}
		W^{1,p(\cdot)}(\Omega)
		=\{u\in W^{1,1}_{\mathrm{loc}}(\Omega):
		\rho_{p(\cdot)}(u)+\rho_{p(\cdot)}(\nabla u)<\infty\}.
	\end{equation}
	This definition is valid for both bounded and unbounded domains. In the setting of a bounded domain, the elementary pointwise inequality $\abs f\leqslant1+\abs f^{p(x)}$ ensures the continuous embedding $W^{1,p(\cdot)}(\Omega)\subset W^{1,1}(\Omega)$. Without loss of generality, we may redefine $p$ on a set of measure zero to ensure that $p^-\leqslant p(x)\leqslant p^+$ holds pointwise everywhere. Furthermore, we emphasize that no density of smooth functions is assumed within $W^{1,p(\cdot)}(\Omega)$.
	
The fundamental inequalities relating the modular and the Luxemburg norm are given by
\begin{equation}\label{eq:modular-norm}
	\min\{\norm f_{p(\cdot)}^{p^-},\norm f_{p(\cdot)}^{p^+}\}
	\leqslant\rho_{p(\cdot)}(f)
	\leqslant\max\{\norm f_{p(\cdot)}^{p^-},\norm f_{p(\cdot)}^{p^+}\},
\end{equation}
as established in \cite{DHHR2011,CruzUribeFiorenza2013}. It is relatively straightforward to show that if a modular inequality of the naive form \eqref{eq:naive-modular} were valid for all $u$, it would imply the corresponding norm inequality. Roughly speaking, this follows by applying the modular estimate to the normalized function $u/\norm{\nabla u}_{p(\cdot)}$ and subsequently invoking \eqref{eq:modular-norm} to deduce the norm constant $\max\{C^{1/p^-},C^{1/p^+}\}$; the trivial case where the gradient vanishes is naturally handled by the connectedness of the domain. Notably, the converse implication does not automatically hold.

{	
	
\revadd{Under our standing assumption $p^+<\infty$, membership in $L^{p(\cdot)}(\Omega)$ implies finiteness of the modular. In particular, for $u\in W^{1,p(\cdot)}(\Omega)$ on a bounded domain,
\[
\rho_{p(\cdot)}(u-\mean{u})
\leqslant 2^{p^+-1}\left[\rho_{p(\cdot)}(u)
+\mathcal L^n(\Omega)\max\{\abs{\mean{u}}^{p^-},\abs{\mean{u}}^{p^+}\}\right]<\infty.
\]
This uses the boundedness of the exponent as well as the integrability of constants. It does not control the double modular, where $u(y)$ is raised to $p(x)$, as the following example shows.}
	
\begin{example}[A possibly infinite double modular]\label{ex:infinite-modular}
	The purpose of this example is to demonstrate that a function can perfectly belong to the variable-exponent Sobolev space $W^{1,p(\cdot)}(\Omega)$, yet its double modular evaluates to infinity. This occurs because the ``frozen'' exponent $p(x)$ applied to the integration variable $u(y)$ can cause a local loss of integrability. Consequently, the main inequalities in this paper must allow for integrals evaluating to $+\infty$.
	
	Let $\Omega=B_2(0)\subset\R^3$, set $p=1$ on the inner ball $B_1(0)$, and set $p=3$ on the annulus $B_2(0)\setminus B_1(0)$. Next, choose a smooth cutoff function $\eta\in C_c^\infty(B_1(0))$ that is identically $1$ in a neighborhood of the origin. The function $u(z)=\eta(z)/\abs z$ is a valid member of $W^{1,p(\cdot)}(\Omega)$: near the origin, its magnitude and the magnitude of its gradient are proportional to $\abs z^{-1}$ and $\abs z^{-2}$, respectively, both of which are locally integrable in dimension $3$ under the local exponent $p=1$. Nevertheless, if we fix any point $x\in B_2(0)\setminus B_1(0)$ where the exponent is $p(x)=3$, the integral over $y$ diverges:
	\begin{equation}\label{eq:infinite-double-modular}
		\int_\Omega\abs{u(x)-u(y)}^{p(x)}\,\dd y
		=\int_\Omega\abs{u(y)}^3\,\dd y=\infty.
	\end{equation}
	This confirms that the extended-integral convention is strictly necessary even for the trivial, unweighted kernel $\kernel\equiv1$. 
	
	Furthermore, the same construction demonstrates that the fractional energy associated with the kernel $\kernel(x,y)=\abs{x-y}^{-3-3\lambda}$ is also infinite for any fractional parameter $\lambda\in(0,1)$. To see this, fix an arbitrary $x\in B_2(0)\setminus\overline{B_1(0)}$ and choose a radius $\varepsilon\in(0,1)$ sufficiently small so that $\eta=1$ identically on $B_\varepsilon(0)$. Because $x$ is strictly separated from $B_\varepsilon(0)$, the distance $\abs{x-y}$ is uniformly bounded from above, meaning the singular fractional weight $\abs{x-y}^{-3-3\lambda}$ is bounded from below by a strictly positive constant $c>0$ for all $y\in B_\varepsilon(0)$. Recalling that $u(x)=0$ and $u(y)=\abs{y}^{-1}$ on this ball, we can bound the fractional integral from below by integrating strictly over $B_\varepsilon(0)$:
	\begin{equation}
		\int_\Omega \frac{\abs{u(x)-u(y)}^3}{\abs{x-y}^{3+3\lambda}}\,\dd y
		\overset{\eta=1\text{ on }B_\varepsilon(0)}{\geqslant} c \int_{B_\varepsilon(0)}\abs y^{-3}\,\dd y
		\overset{}{=}4\pi c \int_0^\varepsilon\frac{\dd r}{r}=\infty.
	\end{equation}
	This demonstrates that membership in the variable-exponent Sobolev space $W^{1,p(\cdot)}(\Omega)$ alone is insufficient to guarantee the finiteness of the fractional energy. Consequently, it is strictly necessary to formulate the main estimates of this paper as inequalities in the extended non-negative sense, allowing for both sides to evaluate to $+\infty$.
\end{example}
}

\subsection{Starshapedness and radial bounds}

To establish our foundational Morrey-type estimates, we first consider starshaped domains. This geometry ensures that connecting segments remain strictly internal, enabling the application of the fundamental theorem of calculus along rays. We also introduce radial bounding functions to uniformly control general nonnegative kernels.

\begin{definition}\label{def:starshaped}
	An open set $U$ is starshaped with respect to a subset $S\subset U$ if
	the closed line segment $[x,y]$ connecting $x$ and $y$ satisfies $[x,y]\subset U$ for every $x\in S$ and $y\in U$.
	Equivalently, this geometric property can be expressed via the indicator function $1_U$ as
	\begin{equation}\label{eq:starshaped-chi}
		1_U(x+r\sigma)\leqslant1_U(x+t\sigma)
		\quad(x\in S,\ \sigma\in\sphere,\ 0\leqslant t\leqslant r).
	\end{equation}
\end{definition}

Note that a convex open set is trivially starshaped with respect to itself. 

In conjunction with this geometry, for any given nonnegative measurable kernel $\kernel$, we define a \emph{radial bounding function} on $S$ as a measurable mapping $\kappa:S\times(0,\infty)\to[0,\infty)$ that satisfies
\begin{equation}\label{eq:radial-bound}
	\kernel(x,y)\leqslant\kappa(x,\abs{x-y})
	\quad\text{for \aev{} }x\in S\text{ and \aev{} }y\in\R^n.
\end{equation}
We emphasize that no monotonicity of $\kappa$ with respect to its radial variable is assumed in this definition.

\subsection{Muckenhoupt weights and translated power weights}

To extend our localized estimates from starshaped sets to general bounded Lipschitz domains via reflection operators, we require specialized weighted estimates. These estimates rely crucially on the properties of Muckenhoupt $A_1$ weights. In this subsection, we briefly recall the definition of the $A_1$ class and demonstrate that the specific translated power weights emerging from our geometric distance kernels satisfy this condition uniformly.

{ 
Recall that a weight is defined as a locally integrable function on $\R^n$ that is positive almost everywhere. A weight $w$ belongs to the Muckenhoupt class $A_1(\R^n)$ if its $A_1$ characteristic (or Muckenhoupt constant), defined by
\begin{equation}\label{eq:A1-characteristic}
	[w]_{A_1}=\sup_Q\left(\frac1{\mathcal{L}^n(Q)}\int_Qw(z)\,\dd z\right)
	\esssup_Q w^{-1}(z),
\end{equation}
is finite, where the supremum is taken over all axis-aligned cubes $Q\subset\R^n$. By definition, we always have $[w]_{A_1} \geqslant 1$, with equality holding if and only if $w$ is equal to a positive constant almost everywhere. Furthermore, the $A_1$ characteristic is a purely structural quantity that is invariant under scalar multiplication, translation, and isotropic dilation. Specifically, for any $c>0$, $a \in \R^n$, and $\lambda > 0$, we have
\begin{equation} \label{eq:basicpropMuckenhoupt}
	[c w]_{A_1} = [w]_{A_1}, \qquad [w(\cdot - a)]_{A_1} = [w]_{A_1}, \qquad [w(\lambda \cdot)]_{A_1} = [w]_{A_1}.
\end{equation}

While the $A_1$ class is traditionally defined using axis-aligned cubes, using Euclidean balls yields an equivalent condition up to a dimensional constant. If $[w]_{A_1}\leqslant K$, enclosing an arbitrary ball $B_r(y)$ within a circumscribed cube $Q$ of volume $\mathcal{L}^n(Q) = 2^n r^n$ yields $\int_{B_r(y)}w \leqslant \int_Q w \leqslant K\mathcal{L}^n(Q)\essinf_Q w$. Because $\essinf_Q w \leqslant w(y)$ for almost every $y\in Q$, this directly implies the crucial pointwise bound
\begin{equation}\label{eq:A1-pointwise}
	\int_{B_r(y)}w(z)\,\dd z \overset{\eqref{eq:A1-characteristic}}{\leqslant} c_n K r^n w(y),
\end{equation}
where $c_n=2^n$. We rely heavily on this specific estimate because it provides the fundamental mechanism to bound integral potential operators uniformly via weighted pointwise values in our subsequent extension arguments. 

Strictly speaking, the exceptional null set where the pointwise bound \eqref{eq:A1-pointwise} fails initially depends on the choice of the bounding cube $Q$. However, a standard approximation argument utilizing a countable collection of rational cubes guarantees that \eqref{eq:A1-pointwise} holds simultaneously for all radii $r>0$ outside a single, common null set (see \cite[Section~7.1]{GrafakosClassical2014}).
}

\revadd{The power-weight characterization in \cite[Example~4.20, p.~145]{CruzUribeFiorenza2013}, together with translation invariance, gives the following lemma. We include the elementary estimate to exhibit its uniformity in the pole.}

\begin{lemma}\label{lem:power-A1}
	If $0\leqslant\alpha<n$, then the weight $w_a(z)=\abs{z-a}^{-\alpha}$ belongs to $A_1(\R^n)$, with a common bound $K(n,\alpha)$ that is strictly independent of the translation pole $a\in\R^n$.
\end{lemma}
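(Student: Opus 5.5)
By the translation invariance in \eqref{eq:basicpropMuckenhoupt}, $[w_a]_{A_1}=[w_0]_{A_1}$ for every $a\in\R^n$. So it suffices to bound the $A_1$ characteristic of $w(z)=\abs{z}^{-\alpha}$ by a constant depending only on $n$ and $\alpha$; the independence from the pole $a$ is then automatic. If $\alpha=0$ the weight is constant and $[w]_{A_1}=1$, so I assume $0<\alpha<n$. The weight is locally integrable precisely because $\alpha<n$, and it is positive away from the origin.

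Fix an axis-aligned cube $Q$ of side length $\ell$ and center $y$. Since $w$ is radially decreasing, $\esssup_Q w^{-1}=\sup_{z\in Q}\abs{z}^{\alpha}\leqslant(\abs y+\sqrt n\,\ell/2)^{\alpha}$. I then split into two cases by comparing $\abs y$ with $\sqrt n\,\ell$.

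In the \emph{far case}, $\abs y\geqslant\sqrt n\,\ell$, every $z\in Q$ satisfies $\abs z\geqslant\abs y-\sqrt n\,\ell/2\geqslant\abs y/2$. Hence the average of $w$ over $Q$ is at most $2^{\alpha}\abs y^{-\alpha}$. Also $\sup_Q\abs z^{\alpha}\leqslant(3/2)^{\alpha}\abs y^{\alpha}$, so the product is at most $3^{\alpha}$.

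In the \emph{near case}, $\abs y<\sqrt n\,\ell$, the cube $Q$ lies in the ball $B_R(0)$ with $R=\tfrac32\sqrt n\,\ell$. Integrating in polar coordinates gives
\[
\frac1{\ell^n}\int_Q\abs z^{-\alpha}\,\dd z\leqslant\frac1{\ell^n}\int_{B_R(0)}\abs z^{-\alpha}\,\dd z=\frac{\abs{\sphere}}{n-\alpha}\,\frac{R^{n-\alpha}}{\ell^n}.
\]
Meanwhile $\esssup_Q w^{-1}\leqslant R^{\alpha}$. Multiplying, the $\ell$-powers cancel, leaving the product bounded by $\abs{\sphere}(n-\alpha)^{-1}(\tfrac32\sqrt n)^{n}$.

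Taking $K(n,\alpha)$ to be the maximum of the two case bounds finishes the proof. The only delicate point is the near case, where the factor $(n-\alpha)^{-1}$ is exactly where the condition $\alpha<n$ enters. Note that the resulting $K$ blows up as $\alpha\uparrow n$. This is harmless for the paper's purposes, since the lemma will be applied with $\alpha=n-1+\theta$ and $\theta<1$ fixed.
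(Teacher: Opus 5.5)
Your proof is correct and follows the paper's argument: translation invariance reduces to $a=0$, a far case gives the bound $3^\alpha$, and a near case uses polar integration over a centred ball, which is where the factor $(n-\alpha)^{-1}$ enters. The only difference is that you work with cubes directly, splitting at twice the half-diagonal, whereas the paper works with balls and then pays a dimensional factor $c_n$ to pass to cubes; your near-case constant $\abs{\sphere}(n-\alpha)^{-1}(\tfrac32\sqrt n)^n$ coincides with the paper's $c_n\, n3^n/(n-\alpha)$ once $c_n$ is taken to be the ball-to-cube volume ratio.
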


{
\begin{proof}
	By translation invariance \eqref{eq:basicpropMuckenhoupt}, we may assume $a=0$. Consider a ball $B=B_r(b)$. We distinguish two cases.\smallskip
	
\noindent\emph{Case 1}. If $\abs b\geqslant2r$, the triangle inequality gives $\abs b - r < \abs z < \abs b + r$ for all $z \in B$. Since the ratio $\frac{\abs b+r}{\abs b-r}$ is decreasing in $\abs b$, it is maximized at the boundary case $\abs b = 2r$, ensuring that the ratio between the supremum and infimum of $\abs z$ on $B$ is at most $3$. Bounding the integrand by its maximum directly yields
	\begin{equation}\label{eq:MHtemp1}
		\left(\frac{1}{\mathcal{L}^n(B)}\int_B\abs z^{-\alpha}\,\dd z\right)\esssup_B\abs z^\alpha
		\leqslant \left( \frac{\esssup_B\abs z}{\essinf_B\abs z} \right)^\alpha \overset{\abs b\geqslant2r}{\leqslant} 3^\alpha.
	\end{equation}

\noindent\emph{Case 2}. If $\abs b<2r$, the triangle inequality implies $B \subset B_{3r}(0)$. Enlarging the domain of integration to this centered ball and evaluating via polar coordinates gives
	\begin{equation}\label{eq:power-ball-integral}
		\int_B\abs z^{-\alpha}\,\dd z
		\overset{B\subset B_{3r}(0)}{\leqslant} \int_{B_{3r}(0)} \abs z^{-\alpha}\,\dd z
		\overset{}{=}\frac{\abs\sphere}{n-\alpha}(3r)^{n-\alpha}.
	\end{equation}
	Because $\esssup_B\abs z^\alpha\leqslant(3r)^\alpha$ and the exact volume is $\mathcal{L}^n(B) = \frac{\abs\sphere}{n} r^n$, combining these estimates yields the explicit uniform bound for the case $\abs b<2r$
	\begin{equation} \label{eq:MHtemp2}
		\left(\frac{1}{\mathcal{L}^n(B)}\int_B\abs z^{-\alpha}\,\dd z\right)\esssup_B\abs z^\alpha
		\overset{\eqref{eq:power-ball-integral}}{\leqslant} \frac{n}{\abs\sphere r^n} \left( \frac{\abs\sphere}{n-\alpha}(3r)^{n-\alpha} \right) (3r)^\alpha = \frac{n 3^n}{n-\alpha}.
	\end{equation}
	Combining \eqref{eq:MHtemp1} and \eqref{eq:MHtemp2} yields the uniform bound $K(n,\alpha) = c_n\max\left\{3^\alpha, \frac{n 3^n}{n-\alpha}\right\}$, which depends exclusively on the dimension $n$ and the exponent $\alpha$. Here $c_n\geqslant1$ accounts for passing from Euclidean balls to axis-aligned cubes: enclose each cube in its circumscribed ball and compare their volumes. This completes the proof.
\end{proof}
}

\subsection{The segment formula and its exceptional set}

Estimates in terms of $\nabla u$ rely on the fundamental theorem of calculus
along segments. For our subsequent arguments, the exceptional set of base
points must be chosen before any exponent or kernel is introduced. The
following consequence of the absolute continuity on lines (ACL)
characterization of Sobolev functions
\cite[Theorem~2.1.4]{Ziemer1989} makes the required quantifiers explicit.
Throughout this subsection, we fix finite Borel representatives of functions
and their weak gradients.

\begin{lemma}[Radial Absolute Continuity]\label{lem:ACL-radial}
	Let $U\subset\R^n$ be open and let $u\in W^{1,1}_{\mathrm{loc}}(U)$.
	There exists a measurable set $U^*\subset U$, with
	$\mathcal{L}^n(U\setminus U^*)=0$, such that, for every $x\in U^*$,
	\begin{equation}\label{eq:segment-formula}
		u(y)-u(x)
		=\int_0^1\nabla u(x+t(y-x))\cdot(y-x)\,\dd t
	\end{equation}
	for almost every $y\in U$ satisfying $[x,y]\subset U$.
	For these endpoints, the integral is absolutely convergent.
	The set $U^*$ depends only on $U$ and the fixed representatives of
	$u$ and $\nabla u$; in particular, it is independent of any exponent
	or kernel used subsequently.
\end{lemma}

{
	\begin{proof}
		Fix $\sigma\in\sphere$. The ACL characterization
		\cite[Theorem~2.1.4]{Ziemer1989}, applied after a rotation and on a
		countable cover of $U$ by relatively compact boxes, shows that on
		almost every line parallel to $\sigma$, the restriction of $u$
		agrees \aev{} with a locally absolutely continuous function on each
		component of the intersection with $U$. Its derivative is
		$\nabla u\cdot\sigma$ \aev{} The one-dimensional fundamental theorem
		\cite[Theorem~8.2]{Brezis2011} applies to this representative.
		Passing to our fixed representative excludes only a null set of
		endpoint pairs on each such line. Fubini's theorem, in the line
		coordinate and the separation $r$, therefore gives
		\begin{equation}\label{eq:radial-ACL-intermediate}
			u(x+r\sigma)-u(x)
			\overset{\text{\cite[Thm.~8.2]{Brezis2011}}}{=}\int_0^r\nabla u(x+s\sigma)\cdot\sigma\,\dd s
		\end{equation}
		with absolute convergence for almost every $(x,r)\in U\times(0,\infty)$
		satisfying $[x,x+r\sigma]\subset U$.
		
		The exceptional set obtained so far may depend on $\sigma$.
		To obtain a single exceptional set of base points, we consider
		the open set of admissible triples
		\begin{equation}
			\mathcal A
			=\bigl\{(x,r,\sigma)\in
			U\times(0,\infty)\times\sphere:
			[x,x+r\sigma]\subset U\bigr\}.
		\end{equation}
		Its openness follows from the compactness of each admissible segment.
		Let $E\subset\mathcal A$ be the set where
		\eqref{eq:radial-ACL-intermediate} fails or its integral is not
		absolutely convergent. This set is Borel: indeed, the maps
		\begin{equation}
			(x,r,\sigma)\longmapsto
			r\int_0^1
			\bigl(\nabla u(x+tr\sigma)\cdot\sigma\bigr)^\pm\,\dd t,
			\qquad a^\pm=\max\{\pm a,0\},
		\end{equation}
		are Borel measurable by the measurability of parameter integrals.
		Their sum determines absolute convergence, and, wherever the sum
		is finite, their difference gives the signed integral.
		
		For every fixed $\sigma$, \eqref{eq:radial-ACL-intermediate} implies
		that the section $E_\sigma=\{(x,r):(x,r,\sigma)\in E\}$ is
		$\mathcal{L}^n\otimes\mathcal{L}^1$-null. Tonelli's theorem therefore
		gives a measurable set $U^*\subset U$ of full measure such that,
		writing $E_x=\{(r,\sigma):(x,r,\sigma)\in E\}$,
		\begin{equation}\label{eq:radial-ACL-sections}
			(\mathcal{L}^1\otimes\mathcal{H}^{n-1})(E_x)=0
			\qquad\text{for every }x\in U^*,
		\end{equation}
		where $\mathcal{H}^{n-1}$ is restricted to $\sphere$.
		Thus the base points have been selected independently of the
		direction and radius.
		
		Fix $x\in U^*$. By \eqref{eq:radial-ACL-sections},
		\begin{equation}
			\int_{\sphere}\int_0^\infty
			1_{E_x}(r,\sigma)\,r^{n-1}\,\dd r\,
			\dd\mathcal{H}^{n-1}(\sigma)
			\overset{\eqref{eq:radial-ACL-sections}}{=}0.
		\end{equation}
		The polar-coordinate formula identifies this integral with the
		Lebesgue measure of the exceptional endpoints $y=x+r\sigma$.
		Hence \eqref{eq:radial-ACL-intermediate} holds for almost every
		admissible endpoint. Substituting $s=tr$ and $r\sigma=y-x$ yields
		\eqref{eq:segment-formula}, with absolute convergence.
		
		For $n=1$, the same argument uses
		$\mathbb{S}^0=\{-1,1\}$ with counting measure.
		No exponent or kernel enters the construction of $U^*$.
	\end{proof}
}

	\section{A Morrey-type estimate on starshaped domains}\label{sec:morrey}
	
	The following lemma is the cornerstone of the paper. To avoid any
	confusion in the application to the bounding ball performed in
	Section~\ref{sec:main}, we denote by $U$ the (possibly unbounded)
	starshaped domain.
	
\begin{lemma}\label{lem:morrey}
	Let $U\subset\R^n$ be open (possibly unbounded), starshaped with
	respect to a measurable set $S\subset U$, and let
	$D=\diam(U)\in(0,\infty]$. Let $p:S\to[1,\infty)$ be measurable,
	let $\theta\in[0,1)$, and let $\kernel$ be a non-negative measurable
	kernel on $\R^n\times\R^n$ admitting a radial bounding function
	$\kappa:S\times(0,\infty)\to[0,\infty)$ as in \eqref{eq:radial-bound}.
	Let $u\in W^{1,1}_{\mathrm{loc}}(U)$. Then for almost every
	$x\in S$,
	\begin{equation}\label{eq:morrey-local}
		\int_U\kernel(x,y)\abs{u(x)-u(y)}^{p(x)}\dd y
		\leqslant\int_0^{D} r^{n+p(x)-2+\theta}\kappa(x,r)
		\left(\int_{B_r(x)\cap U}
		\frac{\abs{\nabla u(z)}^{p(x)}}{\abs{z-x}^{n-1+\theta}}\dd z\right)\!\dd r.
	\end{equation}
	In particular, with
	\begin{equation}\label{eq:c-of-x}
		c(x):=\int_0^{D} r^{n+p(x)-2+\theta}\kappa(x,r)\,\dd r,
	\end{equation}
	for \aev{}\ $x\in S$ with $c(x)<\infty$ one has
	\begin{equation}\label{eq:morrey-global}
		\int_U\kernel(x,y)\abs{u(x)-u(y)}^{p(x)}\dd y
		\leqslant c(x)\int_U
		\frac{\abs{\nabla u(z)}^{p(x)}}{\abs{z-x}^{n-1+\theta}}\,\dd z.
	\end{equation}
\end{lemma}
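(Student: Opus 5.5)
We fix the full-measure set $U^*$ from Lemma~\ref{lem:ACL-radial}, applied on $U$ with the fixed representatives of $u$ and $\nabla u$. We then work with $x\in S\cap U^*$, which is almost every $x\in S$. At such a point we also require the radial bound \eqref{eq:radial-bound} to hold for \aev{} $y$. For $x\in S$, starshapedness gives $[x,y]\subset U$ for every $y\in U$. The segment formula \eqref{eq:segment-formula} therefore holds for \aev{} $y\in U$. In polar coordinates $y=x+r\sigma$ it reads
\[
\abs{u(x)-u(x+r\sigma)}\leqslant\int_0^r\abs{\nabla u(x+s\sigma)}\,\dd s .
\]
The key step is to turn this into a bound with the weight $s^{-(\theta-1)}$ hidden inside. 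I will apply H\"older (or Jensen, since $p(x)\geqslant1$) with respect to the probability-type measure $\frac{1-\theta}{r^{1-\theta}}s^{-\theta}\,\dd s$ on $(0,r)$, writing $\abs{\nabla u}=s^{\theta}\abs{\nabla u}\cdot s^{-\theta}$. This gives
\[
\Bigl(\int_0^r\abs{\nabla u(x+s\sigma)}\dd s\Bigr)^{p(x)}
\leqslant\Bigl(\tfrac{r^{1-\theta}}{1-\theta}\Bigr)^{p(x)-1}\int_0^r s^{\theta(p(x)-1)}\abs{\nabla u(x+s\sigma)}^{p(x)}\,\dd s .
\]
Bounding $s^{\theta(p(x)-1)}\leqslant r^{\theta(p(x)-1)}$ would make the powers of $r$ come out as in \eqref{eq:morrey-local}, up to the factor $(1-\theta)^{1-p(x)}$. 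The statement has constant $1$, so I expect to need a cleaner choice instead. Use plain Jensen with the uniform measure $\dd s/r$:
\[
\Bigl(\int_0^r\abs{\nabla u}\dd s\Bigr)^{p(x)}\leqslant r^{p(x)-1}\int_0^r\abs{\nabla u(x+s\sigma)}^{p(x)}\dd s .
\]
Then insert $1\leqslant (r/s)^{\theta}$, that is, $1=s^{\theta}s^{-\theta}\leqslant r^\theta s^{-\theta}$, so the integrand carries $r^{p(x)-1+\theta}s^{-\theta}$. This has constant exactly $1$, and I would adopt it.

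Next I multiply by $\kernel(x,y)\leqslant\kappa(x,r)$ and integrate over $y\in U$ in polar coordinates, $\dd y=r^{n-1}\dd r\,\dd\mathcal H^{n-1}(\sigma)$, with $1_U(x+r\sigma)$ inserted. Since $r\leqslant D$ whenever $x+r\sigma\in U$, the $r$-integral runs over $(0,D)$. The left side is then at most
\[
\int_0^D r^{n-1}\kappa(x,r)\,r^{p(x)-1+\theta}\int_{\sphere}1_U(x+r\sigma)\int_0^r s^{-\theta}\abs{\nabla u(x+s\sigma)}^{p(x)}\dd s\,\dd\mathcal H^{n-1}(\sigma)\,\dd r .
\]
By \eqref{eq:starshaped-chi}, $1_U(x+r\sigma)\leqslant 1_U(x+s\sigma)$ for $s\leqslant r$, so the indicator can be moved inside. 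Writing $s^{-\theta}=s^{n-1}\cdot s^{-(n-1+\theta)}$ and returning to Cartesian coordinates $z=x+s\sigma$ turns the inner double integral into $\int_{B_r(x)\cap U}\abs{\nabla u(z)}^{p(x)}\abs{z-x}^{-(n-1+\theta)}\dd z$. The total power of $r$ is $n-1+p(x)-1+\theta=n+p(x)-2+\theta$, which gives exactly \eqref{eq:morrey-local}. All manipulations involve nonnegative measurable integrands, so Tonelli justifies them in the extended sense. Measurability in $(r,\sigma,s)$ follows from the Borel representatives.

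Finally, \eqref{eq:morrey-global} follows by enlarging $B_r(x)\cap U$ to $U$, which pulls out the $r$-independent integral and leaves $c(x)$. The convention $0\cdot\infty=0$ handles degenerate cases, and the restriction $c(x)<\infty$ avoids $\infty\cdot 0$ issues. The main obstacle is bookkeeping rather than any single estimate. It has two parts. First, the exceptional set must be chosen once, so that the a.e.-$y$ statement of Lemma~\ref{lem:ACL-radial} and the a.e.-$y$ kernel bound combine into an a.e.-$(r,\sigma)$ statement via the polar change of variables. Second, one must check that the $s^{-\theta}$ insertion is legitimate at $s=0$, which holds since $\theta<1$ and the bound is only used inside nonnegative integrals.
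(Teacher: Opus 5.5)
Your proposal is correct and follows the paper's proof essentially step for step: base points taken from the single full-measure set of Lemma~\ref{lem:ACL-radial}, Jensen with the uniform measure along the segment, polar coordinates with the indicator $1_U(x+r\sigma)$ moved inside via \eqref{eq:starshaped-chi}, the insertion $s^\theta\leqslant r^\theta$, the inverse polar substitution, and finally enlarging $B_r(x)\cap U$ to $U$. The only difference is cosmetic: you insert the factor $(r/s)^\theta$ at the one-dimensional level, whereas the paper does so after passing to polar coordinates. The weighted H\"older detour you discard is indeed unnecessary, and the insertion itself needs only $\theta\geqslant0$, not $\theta<1$.
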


{
\begin{remark}[The weight parameter $\theta$]\label{rem:theta-weight}
	The tunable exponent $\theta \in [0,1)$ is an artificial degree of freedom introduced geometrically during the decoupling of the polar integrals. Because it directly governs the severity of the Riesz-type singularity in the denominator $\abs{z-x}^{n-1+\theta}$, we will consistently refer to $\theta$ as the \emph{weight parameter} in subsequent lemmas. 
	
	Analytically, $\theta$ acts as a mechanism to shift the burden of integrability between the spatial gradient integral and the radial constant $c(x)$. Increasing $\theta$ makes the spatial weight more singular at the pole $z=x$ (necessitating the strict upper bound $\theta < 1$ to preserve local integrability in $n$ dimensions), but simultaneously provides a stronger regularizing factor $r^\theta$ to the radial integral \eqref{eq:c-of-x}, thereby allowing us to accommodate a broader, more singular class of kernels $\kernel(x,y)$.
\end{remark}
}

\begin{proof}
	Discard the null exceptional sets produced by Lemma~\ref{lem:ACL-radial} and \eqref{eq:radial-bound}, and fix a base point $x\in S$. Because $U$ is starshaped with respect to $x$, the segment $[x,y]$ is contained in $U$ for every $y\in U$. Thus, \eqref{eq:segment-formula} holds for almost every $y \in U$. Since the exponent satisfies $p(x) \geqslant 1$, we apply Jensen's inequality to the convex function $s\mapsto s^{p(x)}$ to obtain
	\begin{equation}\label{eq:segment-jensen}
		\abs{u(x)-u(y)}^{p(x)} \overset{\substack{\eqref{eq:segment-formula}}}{\leqslant} \abs{y-x}^{p(x)} \int_0^1 \abs{\nabla u(x+t(y-x))}^{p(x)} \dd t.
	\end{equation}
	The change of variables $s=t\abs{y-x}$ then yields the one-dimensional bound
	\begin{equation}\label{eq:pointwise-ftc}
		\abs{u(x)-u(y)}^{p(x)} \overset{\eqref{eq:segment-jensen}}{\leqslant} \abs{y-x}^{p(x)-1}\int_0^{\abs{y-x}}\abs{\nabla u\left(x+s\frac{y-x}{\abs{y-x}}\right)}^{p(x)}\dd s.
	\end{equation}
	
	To obtain the $n$-dimensional estimate, we extend $\nabla u$ by zero outside $U$, multiply \eqref{eq:pointwise-ftc} by the kernel $\kernel(x,y)$, and integrate over all endpoints $y \in U$. Passing to polar coordinates $y=x+r\sigma$ (with $\dd y=r^{n-1}\dd r\,\dd\sigma$, where $\dd\sigma:=\dd\mathcal{H}^{n-1}(\sigma)$) and applying the radial bound \eqref{eq:radial-bound} yields the iterated integral
	\begin{align}
		\int_U\kernel(x,y)\abs{u(x)-u(y)}^{p(x)}\dd y
		\overset{{\eqref{eq:pointwise-ftc},\eqref{eq:radial-bound}}}{\leqslant} & \int_0^{D} r^{n+p(x)-2}\kappa(x,r)\; \cdot \notag \\
		&\qquad \cdot\;  \int_{\sphere}\int_0^r 1_U(x+r\sigma) \abs{\nabla u(x+s\sigma)}^{p(x)}\dd s\,\dd\sigma\,\dd r, \label{eq:morrey-polar}
	\end{align}
	where the outer integral is truncated at $D=\diam(U)$. By Tonelli's theorem, the non-negative integrands permit all rearrangements of the order of integration.
	
	To decouple the variables $r$ and $s$, we utilize the starshaped geometry \eqref{eq:starshaped-chi}, which ensures $1_U(x+r\sigma)\leqslant1_U(x+s\sigma)$ for $0\leqslant s\leqslant r$. For any $\theta \geqslant 0$, we multiply and divide the inner integrand by $s^{n-1+\theta}$. Bounding the extracted weight by $s^\theta \leqslant r^\theta$ and applying the inverse polar substitution $z=x+s\sigma$ (with $\dd z=s^{n-1}\dd s\,\dd\sigma$) to revert the inner double integral back to Cartesian coordinates gives
	\begin{align}
		\int_U\kernel(x,y)\abs{u(x)-u(y)}^{p(x)}\dd y  & \overset{{\eqref{eq:morrey-polar},\eqref{eq:starshaped-chi}}}{\leqslant} \int_0^{D} r^{n+p(x)-2+\theta}\kappa(x,r)\;\cdot \notag \\
		&\qquad\qquad \cdot\;\int_{\sphere}\int_0^r 1_U(x+s\sigma) \frac{\abs{\nabla u(x+s\sigma)}^{p(x)}}{s^{n-1+\theta}} s^{n-1}\dd s\,\dd\sigma\,\dd r \notag \\
		&\quad= \int_0^{D} r^{n+p(x)-2+\theta}\kappa(x,r) \left( \int_{B_r(x)\cap U}\frac{\abs{\nabla u(z)}^{p(x)}}{\abs{z-x}^{n-1+\theta}}\dd z \right) \dd r. \label{eq:morrey-local-proof-step}
	\end{align}
	This establishes the localized estimate \eqref{eq:morrey-local}.
	
	Finally, enlarging the domain of the inner integral from $B_r(x)\cap U$ to $U$ only increases the non-negative integral while making it strictly independent of $r$. Factoring this term out of the outer $r$-integral immediately yields the global estimate \eqref{eq:morrey-global}.
\end{proof}
	
	\begin{remark}\label{rem:no-loghoelder}
		No regularity of the exponent function $p$ is used beyond measurability.
		The variable exponent enters \eqref{eq:pointwise-ftc} pointwise in $x$;
		all subsequent manipulations are linear in the integration variables.
	\end{remark}
	
\section{Convex domains and the double-integral form}\label{sec:convex}

We now elevate the localized pointwise estimate to a global double-integral form. Because a convex domain is unconditionally starshaped with respect to every point it contains, we can integrate the local Morrey-type estimate globally across both variables. We then specialize this general bound to two principal cases: fractional singular kernels and regularized non-singular kernels.

\begin{corollary}\label{cor:convex}
	Let $\Omega\subset\R^n$ be a convex domain (possibly unbounded) with diameter $D \in (0,\infty]$. Let the kernel $\kernel(x,y)$, its radial upper bound $\kappa(x,r)$, the exponent map $p:\Omega \to [1,\infty)$, and the parameter $\theta$ be precisely as in Lemma~\ref{lem:morrey}, setting $U=S=\Omega$. Assume $c(x)<\infty$ for almost every $x\in\Omega$. 
	
	Then, for every $u\in W^{1,1}_{\mathrm{loc}}(\Omega)$---and in particular for every $u\in W^{1,p(\cdot)}(\Omega)$ when $p^+<\infty$---we have the global double-integral bound:
	\begin{equation}\label{eq:convex-double}
		\iint_{\Omega\times\Omega}
		\kernel(x,y)\abs{u(x)-u(y)}^{p(x)}\dd y\,\dd x
		\leqslant\int_\Omega c(x)\!\left(\int_\Omega
		\frac{\abs{\nabla u(z)}^{p(x)}}{\abs{z-x}^{n-1+\theta}}\,\dd z\right)\dd x.
	\end{equation}
\end{corollary}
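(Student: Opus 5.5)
The plan is to apply Lemma~\ref{lem:morrey} with $U=S=\Omega$ and then integrate the resulting pointwise bound in $x$. Since $\Omega$ is convex, every segment $[x,y]$ with $x,y\in\Omega$ lies in $\Omega$, so $\Omega$ is starshaped with respect to itself in the sense of Definition~\ref{def:starshaped}. The lemma therefore applies with $D=\diam(\Omega)\in(0,\infty]$; unboundedness is allowed there. The radial bounding function is $\kappa:\Omega\times(0,\infty)\to[0,\infty)$, and the hypothesis $u\in W^{1,1}_{\mathrm{loc}}(\Omega)$ is exactly what the lemma requires. For $u\in W^{1,p(\cdot)}(\Omega)$ the definition of the space already gives $u\in W^{1,1}_{\mathrm{loc}}(\Omega)$, so that case is included automatically. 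The argument never uses $p^+<\infty$ or any regularity of $p$.

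The key step is to invoke \eqref{eq:morrey-global}. It holds for almost every $x\in\Omega$ with $c(x)<\infty$. By assumption $c(x)<\infty$ almost everywhere, so there is a null set $N\subset\Omega$ outside of which
\[
\int_\Omega\kernel(x,y)\abs{u(x)-u(y)}^{p(x)}\dd y\leqslant c(x)\int_\Omega\frac{\abs{\nabla u(z)}^{p(x)}}{\abs{z-x}^{n-1+\theta}}\,\dd z .
\]
Here $N$ is the union of the exceptional set from Lemma~\ref{lem:ACL-radial}, the exceptional set from \eqref{eq:radial-bound}, and the set $\{c=\infty\}$. Both sides are nonnegative and possibly infinite, and we use the convention $0\cdot\infty=0$. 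Integrating over $x\in\Omega\setminus N$ preserves the inequality in the extended sense, since null sets do not affect the integral. Tonelli's theorem then identifies the left-hand side with the double integral over $\Omega\times\Omega$, which gives \eqref{eq:convex-double}.

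The main point to check, rather than a real obstacle, is measurability. The integrands $(x,y)\mapsto\kernel(x,y)\abs{u(x)-u(y)}^{p(x)}$ and $(x,z)\mapsto\abs{\nabla u(z)}^{p(x)}\abs{z-x}^{-(n-1+\theta)}$ are nonnegative and jointly measurable, because $p$, $u$ and $\nabla u$ are measurable. Likewise $x\mapsto c(x)$ is measurable by Tonelli, being a parameter integral of the nonnegative measurable function $(x,r)\mapsto r^{n+p(x)-2+\theta}\kappa(x,r)$. So every iterated integral is well defined in $[0,\infty]$, and the $x$-integration of the pointwise bound is legitimate.

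With measurability in place, the corollary follows directly from Lemma~\ref{lem:morrey}. Convexity removes any need for extension or gluing arguments.
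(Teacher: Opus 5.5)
Your proposal is correct and follows the paper's proof: convexity makes $\Omega$ starshaped with respect to every point, Lemma~\ref{lem:morrey} with $U=S=\Omega$ gives \eqref{eq:morrey-global} for a.e.\ $x\in\Omega$, and integrating in $x$ yields \eqref{eq:convex-double}. Your explicit handling of the exceptional null set and of measurability via Tonelli fills in details the paper leaves implicit.
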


\begin{proof}
	Because a convex set is starshaped with respect to all of its points, Lemma~\ref{lem:morrey} applies directly with $U=S=\Omega$, yielding the localized estimate \eqref{eq:morrey-global} for \aev{}\ $x\in\Omega$. Integrating this inequality with respect to $x$ over $\Omega$ immediately yields \eqref{eq:convex-double}.
\end{proof}

We single out two immediate specializations.

\begin{corollary}[Fractional kernel, bounded convex domain]\label{cor:fractional}
	Let $\Omega\subset\R^n$ be a bounded convex domain with diameter $D$. Let $\lambda\in(0,1)$ and define $\kernel(x,y)=\abs{x-y}^{-n-\lambda p(x)}$ with radial bound $\kappa(x,r)=r^{-n-\lambda p(x)}$ for $x\in\Omega$ and $r>0$. We set the kernel to zero on the diagonal and for $x\notin\Omega$. Assume $p:\Omega\to[1,\infty)$ is measurable, $p^+<\infty$, and that the weight parameter $\theta$ from Lemma \ref{lem:morrey} satisfies
	\begin{equation}\label{eq:theta-range}
		\theta\in[0,1) \quad \text{with} \quad \theta>1-(1-\lambda)p^-.
	\end{equation}
	Then there exists a constant $C=C(D,n,\lambda,\theta,p^-,p^+)$ such that the global estimate \eqref{eq:convex-double} holds for this kernel; i.e., for every $u\in W^{1,1}_{\mathrm{loc}}(\Omega)$, and in particular every $u\in W^{1,p(\cdot)}(\Omega)$,
	\begin{equation}
		\iint_{\Omega\times\Omega}
		\frac{\abs{u(x)-u(y)}^{p(x)}}{\abs{x-y}^{n+\lambda p(x)}}\dd y\,\dd x
		\leqslant C\int_\Omega\!\int_\Omega
		\frac{\abs{\nabla u(z)}^{p(x)}}{\abs{z-x}^{n-1+\theta}}\dd z\,\dd x.
	\end{equation}
\end{corollary}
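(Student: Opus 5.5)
The plan is to apply Corollary~\ref{cor:convex} directly with the given kernel. The whole task is to show that the radial coefficient $c(x)$ from \eqref{eq:c-of-x} is finite and bounded uniformly in $x\in\Omega$ by a constant depending only on $D,n,\lambda,\theta,p^-,p^+$. Once that is done, $C:=\sup_x c(x)$ can be pulled out of the $x$-integral on the right-hand side of \eqref{eq:convex-double}.

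First, I check that $\kappa(x,r)=r^{-n-\lambda p(x)}$ is an admissible radial bounding function. It is measurable on $\Omega\times(0,\infty)$ because $p$ is measurable. Off the diagonal we have $\kernel(x,y)=\kappa(x,\abs{x-y})$, and on the diagonal (a null set) $\kernel=0$. So \eqref{eq:radial-bound} holds.

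Next, I compute
\[
c(x)=\int_0^D r^{n+p(x)-2+\theta}\,r^{-n-\lambda p(x)}\,\dd r=\int_0^D r^{(1-\lambda)p(x)+\theta-2}\,\dd r .
\]
Set $\beta(x):=(1-\lambda)p(x)+\theta-1$. Since $1-\lambda>0$ and $p(x)\geqslant p^-$, we get $\beta(x)\geqslant(1-\lambda)p^-+\theta-1=:\beta_0$, and $\beta_0>0$ exactly by \eqref{eq:theta-range}. Hence $c(x)=D^{\beta(x)}/\beta(x)\leqslant D^{\beta(x)}/\beta_0$. Because $\beta(x)$ lies in the interval $[\beta_0,(1-\lambda)p^++\theta-1]$, we have $D^{\beta(x)}\leqslant\max\{D^{\beta_0},D^{(1-\lambda)p^++\theta-1}\}$. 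This yields
\[
C=\beta_0^{-1}\max\{D^{\beta_0},D^{(1-\lambda)p^++\theta-1}\}<\infty,
\]
which depends only on the listed parameters. (If $p$ was redefined on a null set so that $p^-\leqslant p\leqslant p^+$ everywhere, the bound holds for every $x$.)

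Finally, Corollary~\ref{cor:convex} applies because $c(x)<\infty$ everywhere. Bounding $c(x)\leqslant C$ inside the nonnegative integral finishes the proof, in the extended sense. There is no real obstacle here. The only delicate point is ensuring that the exponent stays bounded away from $-1$ uniformly, which is what forces the use of $p^-$ in \eqref{eq:theta-range}, and handling both cases $D\geqslant1$ and $D<1$ through the max.
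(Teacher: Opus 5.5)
Your proposal is correct and follows the paper's proof: you compute $c(x)=D^{\beta(x)}/\beta(x)$, use \eqref{eq:theta-range} to keep $\beta$ uniformly positive, bound $c$ uniformly, and conclude from Corollary~\ref{cor:convex}. The only difference is cosmetic. The paper bounds $q\mapsto D^{(1-\lambda)q+\theta-1}/((1-\lambda)q+\theta-1)$ by continuity on $[p^-,p^+]$, whereas your explicit constant is the one the paper later writes out in Corollary~\ref{cor:variable-gagliardo}.
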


\begin{proof}
	For this specific kernel, evaluating the inner radial integral yields
	\begin{equation}\label{eq:convex-fractional-capacity}
c(x)\overset{\eqref{eq:c-of-x}}{=}\int_0^{D} r^{(1-\lambda)p(x)+\theta-2}\dd r
		=\frac{D^{(1-\lambda)p(x)+\theta-1}}{(1-\lambda)p(x)+\theta-1}.
	\end{equation}
	Under condition \eqref{eq:theta-range}, the denominator is bounded uniformly below by $(1-\lambda)p^-+\theta-1>0$, guaranteeing the integral converges. Because the map $q\mapsto D^{(1-\lambda)q+\theta-1}/((1-\lambda)q+\theta-1)$ is 
\revadd{continuous} on the compact interval $[p^-,p^+]$ and bounded away from zero in the denominator, $\esssup_{\Omega} c(x) \leqslant C < \infty$. The double-integral bound then follows directly from Corollary~\ref{cor:convex}.
\end{proof}

\begin{corollary}[Regularized kernel]\label{cor:regularised}
	Let $\Omega\subset\R^n$ be a convex domain (possibly unbounded). For parameters $\mu>0$ and $q>0$, define the regularized kernel $\kernel(x,y)=(\mu+\abs{x-y}^2)^{-q}$ with radial bound $\kappa(x,r)=(\mu+r^2)^{-q}$. Assume $p:\Omega\to[1,\infty)$ is measurable with $p^+<\infty$, $\theta\in[0,1)$, and the decay condition
	\begin{equation}\label{eq:reg-condition}
		2q>n+p^+-1+\theta
	\end{equation}
	holds. Then there exists a constant $C>0$ such that $\esssup_{\Omega} c(x) \leqslant C < \infty$, and the global estimate \eqref{eq:convex-double} holds for this kernel; { i.e., for every $u\in W^{1,1}_{\mathrm{loc}}(\Omega)$, and in particular every $u\in W^{1,p(\cdot)}(\Omega)$,
	\begin{equation}
		\iint_{\Omega\times\Omega}
		\frac{\abs{u(x)-u(y)}^{p(x)}}{(\mu+\abs{x-y}^2)^q}\dd y\,\dd x
		\leqslant C \int_\Omega\!\int_\Omega
		\frac{\abs{\nabla u(z)}^{p(x)}}{\abs{z-x}^{n-1+\theta}}\dd z\,\dd x.
	\end{equation}
}
\end{corollary}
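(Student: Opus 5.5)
The plan is to reduce everything to Corollary~\ref{cor:convex}: it suffices to show that $c(x)=\int_0^D r^{n+p(x)-2+\theta}(\mu+r^2)^{-q}\,\dd r$ is bounded uniformly in $x\in\Omega$ by a constant depending only on $n,\mu,q,\theta,p^-,p^+$ (here $D\in(0,\infty]$). Once $\esssup_\Omega c\leqslant C$, the assumption $c(x)<\infty$ a.e. of Corollary~\ref{cor:convex} holds, and we may replace $c(x)$ by $C$ in the right-hand side of \eqref{eq:convex-double}. This is legitimate because the inner $z$-integral is nonnegative, possibly infinite, and we use the convention $0\cdot\infty=0$.

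To bound $c(x)$, I would extend the integral to $(0,\infty)$ and write $a(x):=n+p(x)-2+\theta$, so that $a(x)\in[n+p^--2+\theta,\,n+p^+-2+\theta]$. Since $n\geqslant1$ and $p^-\geqslant1$, we have $a(x)\geqslant\theta\geqslant0>-1$, so the integral near $r=0$ causes no trouble. I would split it at $r=1$.
\begin{itemize}
\item On $(0,1)$, use $(\mu+r^2)^{-q}\leqslant\mu^{-q}$ and $r^{a(x)}\leqslant1$. This part is at most $\mu^{-q}$.
\item On $(1,\infty)$, use $(\mu+r^2)^{-q}\leqslant r^{-2q}$ and $r^{a(x)}\leqslant r^{n+p^+-2+\theta}$. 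This part is at most $\int_1^\infty r^{n+p^+-2+\theta-2q}\,\dd r=1/(2q-n-p^++1-\theta)$, which is finite precisely under \eqref{eq:reg-condition}.
\end{itemize}
Hence
\[
c(x)\leqslant C:=\mu^{-q}+\frac{1}{2q-n-p^+ +1-\theta}
\]
for every $x\in\Omega$.

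The only point needing care is the large-$r$ tail when $\Omega$ is unbounded ($D=\infty$). The monotone comparison $r^{a(x)}\leqslant r^{a^+}$ for $r\geqslant1$ is exactly what makes the constant uniform in the merely measurable exponent, and it is why the condition involves $p^+$. Measurability of $c$ follows from Tonelli's theorem, and no other obstacle is expected.
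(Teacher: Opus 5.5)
Your proof is correct and follows the paper's argument. You reduce to Corollary~\ref{cor:convex} by showing that $c$ is uniformly bounded: the part near $r=0$ is harmless because $n+p(x)-2+\theta\geqslant 0$, and the tail is dominated by $r^{n+p^+-2+\theta-2q}$ for $r\geqslant1$ under \eqref{eq:reg-condition}. Your explicit constant $\mu^{-q}+(2q-n-p^{+}+1-\theta)^{-1}$ is simply a more concrete bookkeeping of the same estimate.
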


\begin{proof}
	We must verify that $c(x)$ is uniformly bounded. Near the origin $r=0$, the integrand $r^{n+p(x)-2+\theta}(\mu+r^2)^{-q}$ is bounded above by $r^{n+p^--2+\theta}\mu^{-q}$. Because $n\geqslant 1$, $p^-\geqslant 1$, and $\theta\geqslant 0$, the exponent $n+p^--2+\theta \geqslant 0 > -1$, ensuring integrability at zero without further conditions. 
	
	At infinity, the integrand decays asymptotically as $r^{n+p(x)-2+\theta-2q}$, which is bounded above by $r^{n+p^+-2+\theta-2q}$ for $r\geqslant1$. The decay condition \eqref{eq:reg-condition} guarantees this bounding exponent is strictly less than $-1$, ensuring integrability at infinity. Thus, $c(x)$ is bounded uniformly by a constant $C$ independently of $x$. Note that if $\Omega$ is bounded, the integration truncates at $D<\infty$, rendering the decay condition \eqref{eq:reg-condition} completely unnecessary.
\end{proof}

\begin{remark}[Constant-exponent simplification]\label{rem:constant-exp}
	If the exponent $p$ is strictly constant, $\Omega$ is bounded, and $c(x)\equiv c<\infty$, the integral completely decouples. Reversing the order of integration in \eqref{eq:convex-double} by Tonelli's theorem allows us to independently bound the geometric inner integral:
	\begin{equation}\label{eq:kernel-mass-theta}
		\int_\Omega\frac{\dd x}{\abs{x-z}^{n-1+\theta}}
		\overset{\Omega\subset B_D(z)}{\leqslant}\int_{B_D(z)}\frac{\dd x}{\abs{x-z}^{n-1+\theta}}
		=\abs{\sphere}\int_0^D\rho^{-\theta}\dd\rho
		=\frac{\abs{\sphere}\, D^{1-\theta}}{1-\theta},
	\end{equation}
	which converges because $\theta<1$. This yields the simplified single-integral bound
	\begin{equation}
		\iint_{\Omega\times\Omega}\kernel(x,y)\abs{u(x)-u(y)}^p\dd y\,\dd x
		\overset{\substack{\eqref{eq:convex-double},\eqref{eq:kernel-mass-theta}}}{\leqslant}\frac{c\,\abs{\sphere}\,D^{1-\theta}}{1-\theta}
		\int_\Omega\abs{\nabla u(z)}^p\dd z.
	\end{equation}
	In particular, substituting the fractional kernel from Corollary~\ref{cor:fractional} into this decoupled estimate exactly recovers the classical fractional Gagliardo bound \eqref{eq:gagliardo-classical} on convex domains.
\end{remark}
\section{\texorpdfstring{\revadd{Weighted estimates and extension on Lipschitz domains}}{Weighted estimates and extension on Lipschitz domains}}\label{sec:weighted}
\subsection{A potential estimate with the unweighted mean}

For any function $v \in L^1 (\Omega)$ and any measurable set $A \subset \Omega$
with $0<\mathcal{L}^n(A)<\infty$, we denote the unweighted mean of $v$ over $A$ by
\begin{equation}\label{eq:patch-mean}
	\mymean{v}_A = \frac{1}{\mathcal{L}^n(A)}  \int_A v (x) \hspace{0.17em} \dd x.
\end{equation}
Here and below, a domain is understood to be open and connected.
\begin{lemma}
	\label{lem:potential-representation}Let $\Omega \subset \R^n$ be a bounded
	Lipschitz domain. There exists a constant $A_{\Omega} > 0$, depending
	exclusively on $\Omega$, such that every $v \in W^{1, 1} (\Omega)$ satisfies
	\begin{equation}
		\label{eq:sobolev-rep} \abs{v (z) - \mean{v}} \leqslant
		A_{\Omega}  \int_{\Omega} \frac{\abs{\nabla v (y)}}{\abs{z - y}^{n - 1}}
		\hspace{0.17em} \dd y = A_{\Omega} \cdot I_1 [\abs{\nabla v}] (z)  \quad
		\text{for \aev{} } z \in \Omega,
	\end{equation}
{ where $I_1$ denotes the localized Riesz potential of order $1$
		on $\Omega$.}
\end{lemma}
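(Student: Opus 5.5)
The plan is to reduce the statement to the classical case of domains that are starshaped with respect to a ball, and then glue finitely many such pieces using the connectedness of $\Omega$ and the unweighted $L^1$ Poincar\'e inequality. The building block is the familiar Sobolev representation formula: if $G$ is a bounded open set that is starshaped with respect to a ball $B'\subset G$, and $\phi\in C_c^\infty(B')$ has $\int\phi=1$, then for $v\in W^{1,1}(G)$ and a.e.\ $z\in G$,
\begin{equation*}
	\abs{v(z)-\textstyle\int_{B'}\phi v}\leqslant C(n)\,\frac{\diam(G)^n}{\mathcal L^n(B')}\int_G\frac{\abs{\nabla v(y)}}{\abs{z-y}^{n-1}}\,\dd y .
\end{equation*}
To derive this, I would fix $z\in G^*$ (the full-measure set of Lemma~\ref{lem:ACL-radial}) and write $v(z)-v(w)$ through the segment formula \eqref{eq:segment-formula} for a.e.\ $w\in B'$. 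I would then multiply by $\phi(w)$, integrate, and apply the usual polar change of variables, exactly as in the proof of Lemma~\ref{lem:morrey}. Working with the ACL-based segment formula avoids any density argument. Finally, I would replace $\int\phi v$ by the plain mean $\mymean{v}_{B'}$; this costs a term $\abs{\int\phi v-\mymean{v}_{B'}}$, which is controlled by the same potential evaluated at points of $B'$, after averaging.

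Next comes the covering. Because $\Omega$ is bounded and Lipschitz, it is a finite union $\Omega=\bigcup_{j=1}^N G_j$, where each $G_j$ is open and starshaped with respect to a ball $B_j\subset G_j$. Interior balls serve directly, and boundary patches are taken from the Lipschitz atlas as suitably truncated cones or cylinders under the graph. For $z\in G_j$ I split
\[
v(z)-\mean v=\bigl(v(z)-\mymean{v}_{B_j}\bigr)+\bigl(\mymean{v}_{B_j}-\mean v\bigr).
\]
The first term is bounded by the representation formula on $G_j$, and the integral over $G_j$ can be enlarged to one over $\Omega$. The second term is a constant, and it must also be controlled by $I_1[\abs{\nabla v}](z)$. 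For this I plan to use the two elementary bounds
\[
\abs{\mymean{v}_{B_j}-\mean v}\leqslant \mathcal L^n(B_j)^{-1}\norm{v-\mean v}_{L^1(\Omega)}\leqslant \mathcal L^n(B_j)^{-1}P_\Omega\norm{\nabla v}_{L^1(\Omega)}
\]
and $\abs{z-y}\leqslant\diam\Omega$, which give $\norm{\nabla v}_{L^1(\Omega)}\leqslant(\diam\Omega)^{n-1}I_1[\abs{\nabla v}](z)$. Here $P_\Omega$ is the unweighted $L^1$ Poincar\'e constant of $\Omega$. Putting the pieces together yields $A_\Omega$ in terms of $n$, $\diam\Omega$, $P_\Omega$, $\min_j\mathcal L^n(B_j)$ and the diameters of the $G_j$, all of which depend only on $\Omega$. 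This is consistent with the dependence described in the introduction.

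I expect the main obstacle to be the geometric step: showing carefully that the Lipschitz atlas yields finitely many patches that cover all of $\Omega$ up to the boundary and are each starshaped with respect to a ball. Near a boundary point where $\Omega$ is locally $\{x_n<\gamma(x')\}$ with $\gamma$ $L$-Lipschitz, I would take a small cylinder $\{\abs{x'}<\delta,\ \gamma(x')-h<x_n<\gamma(x')\}$. I would then check that it is starshaped with respect to a ball centred at $(0,\gamma(0)-h/2)$ of radius comparable to $\delta/(1+L)$, provided $h$ is large relative to $L\delta$; this cone-condition verification is standard. Apart from that, some care is needed with the null sets, namely that the representation holds for a.e.\ $z$ simultaneously on each of the finitely many patches. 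This is harmless because there are only finitely many patches.
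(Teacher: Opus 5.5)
Your proposal is correct. Its gluing step is exactly the paper's: you replace the patch mean by $\mean{v}$ via the $L^1$ Poincar\'e--Wirtinger inequality and $\norm{\nabla v}_{L^1(\Omega)}\leqslant(\diam\Omega)^{n-1}I_1[\abs{\nabla v}](z)$.

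The genuine difference is the local estimate. The paper writes each boundary patch as $U_j=F_j(V_j)$, the image of a convex box under the measure-preserving shear $F_j(\xi',t)=(\xi',\varphi_j(\xi')+t)$. It then applies the convex-domain estimate of Gilbarg--Trudinger (Lemma~7.16, with the mean over the whole box) to $v\circ F_j$. Transporting the bound back costs $L_j$ in the gradient through the bi-Lipschitz chain rule, and $L_j^{n-1}$ in the kernel.

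You instead keep the curved patch itself, which is essentially the same kind of set, $\{\abs{x'}<\delta,\ \gamma(x')-h<x_n<\gamma(x')\}$. You show it is starshaped with respect to a ball, and you prove the representation formula directly from Lemma~\ref{lem:ACL-radial} by the polar computation of Lemma~\ref{lem:morrey}, with the mean taken over the ball.

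What each route buys:
\begin{itemize}
\item Flattening gives convexity for free, so the paper needs no aspect-ratio restriction on its patches.
\item Your route needs one, and your geometric check is right. For a ball of radius $\rho\leqslant\delta$ centred at $(0,\gamma(0)-h/2)$, it suffices that $h/2\geqslant(1+L)\rho+4L\delta$.
\item In exchange, as you note, your route avoids Sobolev change of variables, any density argument and any external representation lemma. It stays entirely within the a.e.\ segment formula the paper already established.
\end{itemize}

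One simplification: $\phi$ is never differentiated, so take $\phi=1_{B'}/\mathcal{L}^n(B')$ from the start. The detour through a smooth $\phi$ forces you to convert a potential averaged over $B'$ back into $I_1[\abs{\nabla v}](z)$. That conversion works: average $\abs{w-y}^{1-n}$ over $w\in B'$, then use the $(\diam\Omega)^{n-1}$ bound. It is simply unnecessary.
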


\begin{proof}
	If $n=1$, then $\Omega$ is an interval and the assertion follows with
	$A_{\Omega}=1$ from the one-dimensional Sobolev fundamental theorem
	\cite[Theorem~8.2]{Brezis2011}. We may therefore assume $n\geqslant2$.
	{ We proceed by a finite covering via image patches of convex
		boxes. By compactness of $\partial\Omega$, finitely many boundary
		charts, together with finitely many interior boxes, give a covering
		$\Omega = \bigcup_{j = 1}^m U_j$, where each patch $U_j = F_j
		(V_j)$ is the image of a bounded convex rectangular box $V_j \subset \R^n$
		under a bi-Lipschitz map $F_j$. For convenience, we fix a common
		bi-Lipschitz constant $L_j \geqslant 1$ for $F_j$ and its inverse.
		
		For interior patches $U_j \Subset \Omega$, $F_j = \mathrm{id}$ on $V_j =
		U_j$ (and $L_j = 1$). For boundary patches, after a rigid motion, $V_j =
		Q'_j \times (0, h_j)$ and $F_j$ is the vertical shear transformation
		\begin{equation}\label{eq:potential-flattening}
			F_j (\xi', t) = (\xi', \varphi_j (\xi') + t),
		\end{equation}
		where $\varphi_j : Q'_j \to \R$ is Lipschitz. These maps satisfy
		$\abs{\det DF_j}=1$ \aev{} Hence the change-of-variables formula for
		Lipschitz maps gives $\mathcal{L}^n(U_j)=\mathcal{L}^n(V_j)$ and
		$\mymean{v\circ F_j}_{V_j}=\mymean{v}_{U_j}$; see
		\cite[Section~3.3.3]{EvansGariepy2015}.}
	
	Set $v_j = v \circ F_j \in W^{1,1}(V_j)$. The bi-Lipschitz chain rule
	\cite[Theorem~2.2.2, pp.~52--53]{Ziemer1989} gives
	$\abs{\nabla v_j(\xi)}\leqslant L_j\abs{\nabla v(F_j(\xi))}$ \aev{}
	Applying the potential estimate on convex domains
	\cite[Lemma~7.16, p.~162]{GilbargTrudinger2001} yields
	\begin{equation}\label{eq:convex-potential}
		\abs{v_j (\xi) - \mymean{v_j}_{V_j}} \overset{\text{\cite[Lem.~7.16]{GilbargTrudinger2001}}}{\leqslant} \frac{\diam
			(V_j)^n}{n \mathcal{L}^n(V_j)}  \int_{V_j} \frac{\abs{\nabla v_j (\eta)}}{\abs{\xi
				- \eta}^{n - 1}} \hspace{0.17em} \dd \eta,
	\end{equation}
	where $\mymean{v_j}_{V_j}$ is the unweighted patch mean on $V_j$.
	
	We now transport this bound back to the curved patch $U_j$. We set $z = F_j
	(\xi)$ and $y = F_j (\eta)$. Because $F_j$ preserves Lebesgue measure ($\dd
	y = \dd \eta$) and satisfies the Lipschitz bound $\abs{z - y}
	\leqslant L_j \abs{\xi - \eta}$, the kernel satisfies $\abs{\xi - \eta}^{1 - n} \leqslant L_j^{n - 1} \abs{z - y}^{1 -
		n}$. Substituting these bounds (which contribute $L_j$ from the gradient
	and $L_j^{n - 1}$ from the denominator) yields
	\begin{equation}
		\label{eq:local-patch-rep} \abs{v (z) - \mymean{v}_{U_j}} \overset{\eqref{eq:convex-potential}}{\leqslant} A_j 
		\int_{U_j} \frac{\abs{\nabla v (y)}}{\abs{z - y}^{n - 1}} \hspace{0.17em}
		\dd y \quad \text{for } \text{\aev{} } z \in U_j,
	\end{equation}
	where $A_j = L_j^n \diam (V_j)^n / (n \mathcal{L}^n(V_j))$.
	
	To replace the local patch mean $\mymean{v}_{U_j}$ with the global mean
	$\mean{v}$, we invoke the classical unweighted $L^1$
	Poincar{\'e}--Wirtinger inequality on the connected domain $\Omega$
	\cite[Theorem~5.11.1, pp.~261--262]{Ziemer1989}:
	\begin{equation}
		\label{eq:global-pw-mean} \abs{\mymean{v}_{U_j} - \mean{v}}
		\overset{\eqref{eq:patch-mean}}{\leqslant} \frac{1}{\mathcal{L}^n(U_j)}  \int_{\Omega} \abs{v (x) -
			\mean{v}} \hspace{0.17em} \dd x \overset{\eqref{eq:classical-PW}}{\leqslant}
		\frac{C_{\Omega}}{\mathcal{L}^n(U_j)}  \int_{\Omega} \abs{\nabla v (y)}
		\hspace{0.17em} \dd y.
	\end{equation}
	Setting $d = \diam (\Omega)$, we observe that $1 \leqslant d^{n - 1}
	\abs{z - y}^{1 - n}$ for all distinct $z,y\in\Omega$. Combining
	\eqref{eq:local-patch-rep} and \eqref{eq:global-pw-mean} and taking
	$A_{\Omega} = \max_{1 \leqslant j \leqslant m} A_j + C_{\Omega} d^{n - 1}
	\max_{1 \leqslant j \leqslant m} \mathcal{L}^n(U_j)^{- 1}$ yields
	\eqref{eq:sobolev-rep}.
\end{proof}

{While Lemma~\ref{lem:potential-representation} establishes a
	purely geometric pointwise bound via the Riesz potential, we must now elevate
	this to a global norm estimate in the weighted space $L^q  (\Omega, w
	\hspace{0.17em} \dd z)$. Standard weighted Poincar{\'e} inequalities typically
	control the oscillation of a function around its {\emph{weighted}} average.
	For our subsequent extension operator, however, it is geometrically critical
	to bound the deviation from the {\emph{unweighted}} Lebesgue mean
	$\mean{v}$. The following lemma achieves exactly this, using
	the $A_1$ Muckenhoupt condition to ensure the resulting constant remains
	strictly uniform across the entire exponent range $q \in [1, P]$. }

\begin{lemma}
	[Uniform weighted Poincar{\'e} inequality]\label{lem:unweighted-mean}
	{Let $\Omega \subset \R^n$ be a bounded Lipschitz domain. Let
		$P \in [1, \infty)$ be a fixed upper bound for a set of integration
		exponents, and let $K \geqslant 1$ be a fixed upper bound for the
		characteristic of a class of $A_1$ Muckenhoupt weights.}
	
	There exists a strictly uniform constant $C_P = C_P  (n, \Omega, P, K) > 0$
	such that, for every exponent $q \in [1, P]$, every $A_1$ weight $w$
	satisfying $[w]_{A_1} \leqslant K$, and every $v \in W^{1, 1} (\Omega)$,
	\begin{equation}
		\label{eq:unweighted-mean} \int_{\Omega} \abs{v (z) -
			\mean{v}}^q w (z) \hspace{0.17em} \dd z \leqslant C_P 
		\int_{\Omega} \abs{\nabla v (z)}^q w (z) \hspace{0.17em} \dd z.
	\end{equation}
	The inequality holds in $[0, \infty]$. In particular, if the right-hand side
	is finite, then $v - \mean{v} \in L^q  (\Omega, w \hspace{0.17em}
	\dd z)$ without requiring a priori weighted integrability of $v$.
\end{lemma}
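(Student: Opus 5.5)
We combine the pointwise potential bound of Lemma~\ref{lem:potential-representation} with a weighted bound for the localized Riesz potential that uses only the $A_1$ pointwise estimate \eqref{eq:A1-pointwise}. Lemma~\ref{lem:potential-representation} gives, for \aev{} $z\in\Omega$,
\[
\abs{v(z)-\mean v}^q\leqslant A_\Omega^q\Bigl(\int_\Omega\frac{\abs{\nabla v(y)}}{\abs{z-y}^{n-1}}\dd y\Bigr)^q .
\]
It therefore suffices to prove $\int_\Omega (I_1 f)^q w\,\dd z\leqslant C\int_\Omega f^q w\,\dd z$ for nonnegative $f$, with $C$ depending only on $n$, $d=\diam(\Omega)$, $P$ and $K$. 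All integrands are nonnegative, so Tonelli's theorem justifies every step in $[0,\infty]$. This also yields the final claim: no weighted integrability of $v$ is assumed.

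\emph{Step 1: Jensen with respect to the kernel measure.} Fix $z$ and set $\mu_z(\dd y)=\abs{z-y}^{1-n}1_\Omega(y)\,\dd y$. Its total mass is at most $\int_{B_d(z)}\abs{z-y}^{1-n}\dd y=\abs{\sphere}d$. Writing $I_1f(z)=M_z\cdot(\text{average of } f \text{ against }\mu_z/M_z)$ with $M_z=\mu_z(\Omega)\leqslant\abs\sphere d$, Jensen's (or H\"older's) inequality for $q\geqslant1$ gives
\[
(I_1f(z))^q\leqslant (\abs\sphere d)^{q-1}\int_\Omega\frac{f(y)^q}{\abs{z-y}^{n-1}}\dd y .
\]
This pointwise bound is what keeps the constant uniform down to $q=1$, since no maximal-function $L^q$ bound, which degenerates as $q\to1$, is ever used.

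\emph{Step 2: the dual kernel integral via $A_1$.} Multiply by $w(z)$, integrate in $z$, and swap the integrals. It remains to show that
\[
\int_\Omega\frac{w(z)}{\abs{z-y}^{n-1}}\dd z\leqslant C(n,K)\,d\,w(y)\quad\text{for \aev{} }y .
\]
Decompose $B_d(y)$ into dyadic annuli $2^{-k-1}d\leqslant\abs{z-y}<2^{-k}d$, $k\geqslant0$. On the $k$-th annulus the kernel is at most $(2^{-k-1}d)^{1-n}$, and by \eqref{eq:A1-pointwise} $\int_{B_{2^{-k}d}(y)}w\leqslant c_nK(2^{-k}d)^nw(y)$. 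Summing, the $k$-th term is $\lesssim c_nK2^{n-1}2^{-k}d\,w(y)$, and the geometric series converges. The relevant null set is the common exceptional set of \eqref{eq:A1-pointwise} valid for all radii, as noted after that estimate.

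\emph{Step 3: assembly.} Combining the steps gives
\[
C_P=A_\Omega^P\max\{1,\abs\sphere d\}^{P}\,c_nK2^{n}
\]
up to harmless adjustments, taking maxima over $q\in[1,P]$ of quantities of the form $a^q$ to remove the dependence on $q$. I expect Step 2 to be the main point requiring care, specifically making sure the null set in \eqref{eq:A1-pointwise} is independent of the radius. The paper has already addressed this, so the remaining steps are routine.
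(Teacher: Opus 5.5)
Your proof is correct and follows the paper's argument step for step. You use the pointwise potential bound, then Jensen with respect to the kernel measure of mass at most $\abs\sphere d$, then Tonelli, and finally an $A_1$ bound $I_1[w]\lesssim dK\,w$ \aev{}. The one difference is that you prove this last bound by summing \eqref{eq:A1-pointwise} over dyadic annuli, whereas the paper cites the radial-kernel maximal bound \cite[Theorem~2.1.10]{GrafakosClassical2014} together with $M(w)\leqslant Kw$; that is essentially the same computation.
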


\begin{proof}
	Set $d=\diam(\Omega)$. For a non-negative measurable function
	$f$ on $\Omega$, let $I_1[f]$ denote its localized Riesz potential, with
	values in $[0,\infty]$, defined by
	\begin{equation}\label{eq:localized-riesz}
		I_1 [f] (z) = \int_{\Omega} \frac{f (y)}{\abs{z - y}^{n - 1}}
		\hspace{0.17em} \dd y, \quad z \in \Omega .
	\end{equation}
	For each fixed $z \in \Omega$, the singular kernel defines a finite positive
	measure on $\Omega$. Its total mass $m (z)$ is uniformly bounded by the
	radial integral over $B_d (z)$:
	\begin{equation}\label{eq:kernel-mass}
		m (z) := \int_{\Omega} \frac{\dd y}{\abs{z - y}^{n - 1}} \overset{\Omega\subset B_d(z)}{\leqslant}
		\int_{B_d (z)} \frac{\dd y}{\abs{z - y}^{n - 1}} = \abs{\sphere} d = : a_d .
	\end{equation}
	Jensen's inequality for the probability measure
	$\dd\mu_z(y)=\dd y/(m(z)\abs{z-y}^{n-1})$ gives
	\begin{equation}
		\label{eq:kernel-jensen} (I_1 [f] (z))^q \overset{\text{Jensen}}{\leqslant} m (z)^{q - 1} I_1 [f^q]
		(z) \overset{\eqref{eq:kernel-mass}}{\leqslant} a_d^{q - 1} I_1 [f^q] (z) .
	\end{equation}
	For unbounded $f$, this follows by applying Jensen's inequality to
	$\min\{f,N\}$ and letting $N\to\infty$ by monotone convergence.
	To establish the weighted global estimate, we apply Tonelli's theorem to
	shift the Riesz potential onto the weight $w$:
	\begin{equation}
		\label{eq:tonelli-dual} \int_{\Omega} (I_1 [f] (z))^q w (z)
		\hspace{0.17em} \dd z \overset{\eqref{eq:kernel-jensen}}{\leqslant} a_d^{q - 1}  \int_{\Omega} \left(
		\int_{\Omega} \frac{f (y)^q}{\abs{z - y}^{n - 1}} \hspace{0.17em} \dd y
		\right) w (z) \hspace{0.17em} \dd z \overset{\text{Tonelli}}{=}a_d^{q - 1}  \int_{\Omega} f (y)^q
		I_1 [w] (y) \hspace{0.17em} \dd y.
	\end{equation}
	Let $M$ denote the uncentered Hardy--Littlewood maximal operator over
	axis-aligned cubes in $\R^n$. The maximal-function bound for convolution with a
	non-negative radial decreasing kernel
	\cite[Theorem~2.1.10, pp.~91--92]{GrafakosClassical2014}, applied to
	$\abs{x}^{1-n}1_{B_d(0)}(x)$, gives
	\begin{equation}\label{eq:potential-maximal}
		I_1 [w] (y) \overset{\text{\cite[Thm.~2.1.10]{GrafakosClassical2014}}}{\leqslant} c_n d \cdot M (w) (y)  \quad \text{for \aev{} } y \in
		\Omega .
	\end{equation}
	Here $c_n>0$ absorbs the dimensional comparison between balls and cubes.
	With the cube convention in \eqref{eq:A1-characteristic},
	$[w]_{A_1}=\operatorname*{ess\,sup}_{\R^n}M(w)/w$. Thus
	$M(w)\leqslant K w$ \aev{}, and therefore
	the Riesz potential of the weight is uniformly bounded by the weight itself:
	\begin{equation}
		\label{eq:riesz-maximal-bound} I_1 [w] (y) \overset{\eqref{eq:potential-maximal}}{\leqslant} c_n dM (w) (y)
		\overset{\eqref{eq:A1-characteristic}}{\leqslant} c_n dKw (y)  \quad \text{for \aev{} } y \in \Omega .
	\end{equation}
	Substituting \eqref{eq:riesz-maximal-bound} into \eqref{eq:tonelli-dual},
	and setting $f = \abs{\nabla v}$, we deduce
	\begin{equation}\label{eq:weighted-potential-bound}
		\int_{\Omega} (I_1 [\abs{\nabla v}] (z))^q w (z) \hspace{0.17em} \dd z
		\overset{{\eqref{eq:tonelli-dual},\eqref{eq:riesz-maximal-bound}}}{\leqslant} c_n dKa_d^{q - 1}  \int_{\Omega} \abs{\nabla v (y)}^q w (y)
		\hspace{0.17em} \dd y.
	\end{equation}
	Finally, applying the potential estimate \eqref{eq:sobolev-rep} from
	Lemma~\ref{lem:potential-representation}, we conclude
	\begin{equation}\label{eq:weighted-poincare-q}
\begin{aligned}
\int_{\Omega}\abs{v(z)-\mean{v}}^q w(z)\,\dd z
&\revadd{\leqslant}
A_\Omega^q\int_\Omega (I_1[\abs{\nabla v}](z))^q w(z)\,\dd z\\
&\revadd{\leqslant}
A_\Omega^q c_n dK a_d^{q-1}\int_\Omega\abs{\nabla v(y)}^q w(y)\,\dd y.
\end{aligned}
\end{equation}
	To ensure the final constant is uniform across the entire interval $q \in
	[1, P]$, we eliminate the explicit dependence on $q$ in the factors
	$A_{\Omega}^q$ and $a_d^{q - 1}$ by using $A_{\Omega}^q \leqslant
	\max \{A_{\Omega}, A_{\Omega}^P \}$ and $a_d^{q - 1} \leqslant \max \{a_d^0,
	a_d^{P - 1} \} = \max \{1, a_d^{P - 1} \}$. Substituting these endpoint
	bounds confirms that \eqref{eq:unweighted-mean} holds with the constant
	\begin{equation}\label{eq:weighted-poincare-constant}
		C_P = c_n dK \max \{A_{\Omega}, A_{\Omega}^P \} \max \{1, a_d^{P - 1} \},
	\end{equation}
	which depends on $P$ but is strictly independent of the specific choice of
	$q \in [1, P]$.
\end{proof}

\subsection{A common extension for translated power weights}
{ While general
weighted Sobolev extension theorems on Lipschitz domains are established in
{\cite{Chua1992}}, such results are typically formulated for a fixed exponent
$q$. Applying this general machinery directly would require tracking the
dependence of the extension constants on $q$ to verify their uniform
boundedness across $q \in [1,P]$.

To avoid this constant-tracking in general $A_p$ extension results, we exploit the specific geometric structure of our translated power
weights:
\begin{equation}
	w_a (z) = \abs{z - a}^{- \alpha}, \quad \text{with pole } a \in \Omega
	\text{ and exponent } 0 \leqslant \alpha < n.
\end{equation}
For $\alpha=0$, we set $w_a\equiv1$. For this family, reflection across
a fixed finite Lipschitz atlas provides a direct proof with fully
transparent and uniform control of the constants across all $q \in [1, P]$.
The restriction $a \in \Omega$ is essential to the geometric comparison below;
we do not assert that such a simple reflection estimate holds for arbitrary
$A_1$ weights.}

\begin{lemma}[Weighted reflection extension]
	\label{lem:weighted-tools}Let $\Omega \subset \R^n$ be a bounded
	Lipschitz domain. There exist a bounded linear operator $E : W^{1, 1}
	(\Omega) \to W^{1, 1} (\R^n)$ and a compact set $K \subset
	\R^n$, depending only on a fixed Lipschitz atlas and subordinate
	cutoffs, such that $Ev = v$ \aev{} in $\Omega$ and $\mathrm{supp} (Ev) \subset
	K$.
	
	For every $P \in [1, \infty)$, there exists $C_E = C_E  (n, \Omega, P) > 0$
	such that
	\begin{equation}
		\label{eq:weighted-extension} \int_{\R^n} \frac{\abs{Ev (z)}^q +
			\abs{\nabla Ev (z)}^q}{\abs{z - a}^{\alpha}} \hspace{0.17em} \dd z
		\leqslant C_E  \int_{\Omega} \frac{\abs{v (z)}^q + \abs{\nabla v
				(z)}^q}{\abs{z - a}^{\alpha}} \hspace{0.17em} \dd z
	\end{equation}
	for all $v \in W^{1, 1} (\Omega)$, $a \in \Omega$, $\alpha \in [0, n)$, and
	$q \in [1, P]$. The operator $E$ and the set $K$ are strictly independent of
	$P, q, a, \alpha$; the integrals are permitted to evaluate to $+ \infty$.
\end{lemma}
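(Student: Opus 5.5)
I will use the classical reflection construction (Stein/Calderón-type first-order reflection suffices for $W^{1,1}$) across a fixed finite Lipschitz atlas, and check that each piece of the construction distorts the weight $\abs{z-a}^{-\alpha}$ only by a bounded factor uniformly in $a\in\Omega$, $\alpha\in[0,n)$.

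\emph{The construction.} Fix the covering from the proof of Lemma~\ref{lem:potential-representation}. It consists of finitely many boundary cylinders $Q_j'\times(-h_j,h_j)$, in rotated coordinates in which $\Omega$ is locally the supergraph $\{x_n>\varphi_j(x')\}$ with $\varphi_j$ $L$-Lipschitz, together with one interior open set. Take a smooth partition of unity $\{\psi_j\}$ subordinate to slightly shrunken cylinders plus the interior piece. For each boundary chart define the reflection
\[
R_j(x',x_n)=(x',2\varphi_j(x')-x_n),
\]
a bi-Lipschitz involution of the cylinder mapping its exterior part into $\Omega$, with constant $\Lambda=\Lambda(L)$ and $\abs{\det DR_j}=1$. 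Set
\[
Ev=\psi_0 v+\sum_j \bigl(\psi_j v\bigr)^{\ast}_j ,
\]
where $(g)^\ast_j$ equals $g$ on $\Omega$ and $g\circ R_j$ on the reflected side, multiplied by a fixed cutoff $\chi\in C_c^\infty$ equal to $1$ near $\overline\Omega$. This makes the support lie in a fixed compact $K$. Standard arguments (traces agree across the graph; ACL characterization) show $Ev\in W^{1,1}(\R^n)$, $Ev=v$ on $\Omega$, and $E$ is linear and independent of $q,a,\alpha,P$. Pointwise one has
\[
\abs{\nabla Ev(z)}\leqslant C_0\bigl(\abs{v}+\abs{\nabla v}\bigr)(R_j z)
\]
on the reflected part, summed over finitely many $j$, with $C_0$ depending on $\Lambda$ and on $\norm{\nabla\psi_j}_\infty,\norm{\nabla\chi}_\infty$. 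Raising to the power $q$ gives constants $(mC_0)^q\leqslant(mC_0)^P$, where $m$ is the number of charts; this is where $P$ enters.

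\emph{The key geometric comparison, which is the main obstacle.} Changing variables $y=R_jz$ with unit Jacobian, it suffices to show
\[
\abs{R_j y-a}^{-\alpha}\leqslant C\,\abs{y-a}^{-\alpha}
\quad\text{for } y\in\Omega\cap\text{cylinder},\ a\in\Omega,
\]
with $C$ independent of $a,\alpha$. This amounts to proving $\abs{y-a}\leqslant c\,\abs{R_jy-a}$ with $c=c(L)$, after which one raises to the power $\alpha<n$ and obtains the factor $c^n$.

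For $a$ inside the cylinder, write $z=R_jy$, which lies outside $\Omega$. Since $a\in\Omega$, the segment $[z,a]$ crosses the graph. From the Lipschitz cone property one gets
\[
\operatorname{dist}(z,\partial\Omega)\leqslant\abs{z-a}
\quad\text{and}\quad
\abs{z_n-\varphi(z')}\leqslant\sqrt{1+L^2}\,\operatorname{dist}(z,\operatorname{graph}).
\]
Then
\[
\abs{y-z}=2\abs{z_n-\varphi(z')}\leqslant 2\sqrt{1+L^2}\,\abs{z-a},
\]
hence $\abs{y-a}\leqslant(1+2\sqrt{1+L^2})\abs{z-a}$. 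For $a$ far from the cylinder, all distances are comparable to $\operatorname{dist}(a,\text{cylinder})$ up to the fixed diameters; handle this by slightly enlarging the cylinders, so that only poles within bounded distance need the argument above, while the rest give bounded ratios. This is exactly where $a\in\Omega$ is used: a pole outside $\Omega$ could sit at $z$ itself.

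\emph{Assembling the estimate.} Combining the pieces, the interior piece and the $\Omega$-part of each reflected piece are trivially bounded. The lower-order terms from derivatives of the cutoffs are controlled by $\abs{v}^q$ weighted, which is why the zeroth-order term appears on the right of \eqref{eq:weighted-extension}. One obtains
\[
C_E=C(n,L,\text{atlas})^{P}.
\]
All integrals are nonnegative, so everything holds in $[0,\infty]$ by Tonelli.
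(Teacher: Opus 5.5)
Your proposal is correct and follows essentially the same route as the paper: measure-preserving graph reflections $R_j(z',z_n)=(z',2\varphi_j(z')-z_n)$ on a fixed atlas, pointwise control of the reflected pieces with $q$-dependent constants bounded by their value at $q=P$, and the distortion bound $\abs{R_jz-a}\leqslant L\abs{z-a}$. As in the paper, that bound is proved in two cases: poles inside the cylinder, via Lipschitz control of the depth below the graph, and poles outside it, via the positive separation between the cutoff support and the complement of the cylinder. The differences are cosmetic: you reflect $\psi_j v$ rather than multiplying $v\circ R_j$ by $\psi_j$, you add a redundant global cutoff $\chi$, and you bound the depth through the crossing point of $[z,a]$ with the graph rather than directly through $\varphi_j(a')<a_n$.
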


\begin{proof}
	We first construct the extension via local reflections, then establish the
	weighted estimate.\smallskip
	
	{ {\noindent}\emph{Step 1: Construction.} Choose a finite boundary atlas
	of graph cylinders. Geometrically, each cylinder $C_j$ is the region
	between two vertical translates of a boundary graph over an open cube.
	The boundary itself is described
	by a Lipschitz graph $z_n = \varphi_j (z')$, whose complement in the
	cylinder consists of two disjoint regions: an upper half $C_j^+$ and a lower half $C_j^-$.
	Because $\Omega$ is a Lipschitz domain, we can always orient our local rigid
	coordinates so that the interior of $\Omega$ lies strictly above this graph.
	Consequently, the upper half exactly captures the interior portion of the
	domain ($C_j^+ = C_j \cap \Omega$), while the lower half lies completely
	outside.
	
	Written in these local coordinates, the sets are defined as:
	\begin{equation}
		\begin{array}{ccl}
			C_j & = & \{(z', z_n) : z' \in Q'_j, \hspace{0.17em} \abs{z_n -
				\varphi_j (z')} < h_j \},\\
			C_j^{\pm} & = & \{z \in C_j : \pm (z_n - \varphi_j (z')) > 0\},\\
			C_j^+ & = & C_j \cap \Omega,
		\end{array} \label{eq:extension-graph-cylinder}
	\end{equation}
	where $Q'_j$ is an open cube, $h_j > 0$ is the vertical half-height, and
	$\varphi_j$ is Lipschitz with constant $M_j$.
	
	Choose an open interior set $U_0\Subset\Omega$ such that
	$\bar{\Omega}\subset U_0\cup\bigcup_{j=1}^m C_j$. By
	\cite[Lemma~9.3, p.~276]{Brezis2011}, choose smooth cutoffs
	$0\leqslant\psi_j\leqslant1$ subordinate to this cover, satisfying:
	\begin{equation}
		\begin{array}{rcl}
			\sum_{j = 0}^m \psi_j & = & 1 \quad \text{near } \bar{\Omega},\\
			K_0 & := & \mathrm{supp} \psi_0 \Subset U_0,\\
			K_j & := & \mathrm{supp} \psi_j \Subset C_j  \quad (1 \leqslant j
			\leqslant m) .
		\end{array} \label{eq:extension-partition}
	\end{equation}
	Discard identically zero boundary cutoffs and relabel the remaining
	patches. For $j \geqslant 1$, define the local reflection maps
	\begin{equation}
		\label{eq:extension-graph-reflection} R_j  (z', z_n) = (z', 2 \varphi_j
		(z') - z_n) .
	\end{equation}
	Each $R_j:C_j\to C_j$ is a bi-Lipschitz involution satisfying
	$R_j(C_j^-)=C_j^+$. Its derivative in these coordinates is
	\begin{equation}
		DR_j(z)\overset{\eqref{eq:extension-graph-reflection}}{=}\left(\begin{array}{cc}
			I_{n-1}&0\\
			2\nabla\varphi_j(z')^{\mathsf{T}}&-1
		\end{array}\right),\qquad
		\abs{\det DR_j}=1,\qquad \norm{DR_j}\leqslant1+2M_j
		\quad\text{\aev{}}
		\label{eq:reflection-jacobian}
	\end{equation}
	The change-of-variables formula for Lipschitz maps
	\cite[Section~3.3.3]{EvansGariepy2015} therefore shows that $R_j$
	preserves Lebesgue measure.\smallskip
}
	
	{\noindent}\emph{Step 2: Definition of the linear extension operator
		$E$.} We define the global extension piece by piece. For the interior region
	($j = 0$), the cutoff $\psi_0$ is compactly supported strictly inside
	$\Omega$. Therefore, the product $\psi_0 v$ vanishes identically in a
	neighborhood of the boundary $\partial \Omega$. We define $e_0 v$ by simply
	extending $\psi_0 v$ by zero to all of $\R^n$; the usual
	zero-extension property gives $e_0v\in W^{1,1}(\R^n)$
	\cite[Remark~4(ii), p.~265]{Brezis2011}.
	
	For the boundary regions ($j \geqslant 1$), we define the localized pieces
	$e_j v$ by preserving the function inside $\Omega$ and reflecting it across
	the boundary graph into the exterior:
	\begin{equation}\label{eq:local-extension}
		e_j v (z) = \left\{\begin{array}{ll}
			\psi_j (z) v (z), & \text{for } z \in C_j^+,\\
			\psi_j (z) v (R_j z), & \text{for } z \in C_j^-,\\
			0, & \text{for } z \in \R^n \setminus C_j .
		\end{array}\right.
	\end{equation}
	Values on the boundary graph, a set of Lebesgue measure zero, may be
	assigned arbitrarily. To justify the gluing across this graph, flatten
	$C_j$ by the bi-Lipschitz map
	$F_j(\xi',t)=(\xi',\varphi_j(\xi')+t)$ from
	$Q'_j\times(-h_j,h_j)$. The even reflection of
	$(v|_{C_j^+})\circ F_j$ belongs to
	$W^{1,1}(Q'_j\times(-h_j,h_j))$ by
	\cite[Lemma~9.2, pp.~273--275]{Brezis2011}, after rescaling the box.
	Composition with $F_j^{-1}$
	preserves $W^{1,1}$ by \cite[Theorem~2.2.2]{Ziemer1989}.
	Multiplication by $\psi_j$ and zero extension outside $C_j$ then give
	$e_jv\in W^{1,1}(\R^n)$, since $K_j\Subset C_j$.
	
	Summing these local pieces, we define the global extension operator:
	\begin{equation}
		\label{eq:extension-operator-definition} Ev = \sum_{j = 0}^m e_j v, \qquad
		\text{supported in the compact set} \quad K := \bigcup_{j = 0}^m K_j
		.
	\end{equation}
	Clearly, $E$ is a linear operator. Because $\sum_{j = 0}^m \psi_j = 1$
	inside $\Omega$, we recover $Ev=v$ and hence $\nabla Ev=\nabla v$
	almost everywhere in $\Omega$.
	
	We show that we can bound the magnitude and the weak gradient of the
	reflected pieces on the exterior region $C_j^-$. Precisely, we now show that
	the extension is pointwise controlled by the function's behavior in the
	interior:
	\begin{equation}
		\label{eq:reflection-gradient} \abs{(e_j v) (z)} + \abs{\nabla (e_j v)
			(z)} \leqslant b \left( \abs{v (R_j z)} + \abs{\nabla v (R_j z)} \right) 
		\quad \text{for \aev{} } z \in C_j^-,
	\end{equation}
	with
	\begin{equation}
		b := \max_{1 \leqslant j \leqslant m} \{1 + \norm{\nabla
			\psi_j}_{\infty}, 1 + 2 M_j \} \geqslant 1 \label{eq:geomconstb}
	\end{equation}
	a purely geometric constant. Indeed, by definition, $(e_j v) (z) = \psi_j
	(z) v (R_j z)$ for $z \in C_j^-$. Because $0 \leqslant \psi_j \leqslant 1$,
	the function itself is trivially bounded by $\abs{(e_j v) (z)} \leqslant
	\abs{v (R_j z)}$. To bound the gradient, we apply the standard Sobolev
	product and chain rules
	\cite[Remark~4(ii), p.~265]{Brezis2011} and
	\cite[Theorem~2.2.2]{Ziemer1989}, which yield:
	\begin{equation}\label{eq:local-extension-gradient}
		\nabla (e_j v) (z) \overset{\text{\cite{Brezis2011,Ziemer1989}}}{=}\nabla \psi_j (z) v (R_j z) + \psi_j (z)  (DR_j
		(z))^{\mathsf{T}} \nabla v (R_j z) .
	\end{equation}
	Taking the absolute value, and using the established bounds $\abs{\nabla
		\psi_j} \leqslant \norm{\nabla \psi_j}_{\infty}$ and $\norm{D  R_j}
	\leqslant 1 + 2 M_j$, we obtain the pointwise estimate:
	\begin{equation}\label{eq:local-extension-gradient-bound}
		\abs{\nabla (e_j v) (z)} \overset{{\eqref{eq:local-extension-gradient},\eqref{eq:reflection-jacobian}}}{\leqslant} \norm{\nabla \psi_j}_{\infty} \abs{v
			(R_j z)} + (1 + 2 M_j) \abs{\nabla v (R_j z)} .
	\end{equation}
	Adding the bounds for the function and its gradient together, we can
	uniformly dominate the coefficients on the right-hand side by defining the
	global constant $b$ in \eqref{eq:geomconstb}. This
	immediately guarantees \eqref{eq:reflection-gradient}.\smallskip
	
	{
	{\noindent}\emph{Step 3: The weighted estimate.} The aim of this step is to
	show that the singular weight $\abs{z - a}^{- \alpha}$ is quasi-preserved
	under reflection, i.e., that there exists a uniform structural constant $L
	\geqslant 1$, strictly independent of the chosen point $z$, the pole $a$,
	and the local patch $j$, such that for any $\alpha \in [0, n)$ the following
	weight bound holds:
	\begin{equation}
		\label{eq:reflection-weight} \frac{1}{\abs{z - a}^{\alpha}} \leqslant
		\frac{L^n}{\abs{R_j z - a}^{\alpha}}  \quad \text{for \aev{} } z \in K_j
		\cap C_j^- .
	\end{equation}
	For that, we must mathematically quantify and bound the distortion of the
	distance to the pole $a$ when a point $z \in K_j \cap C_j^-$ is reflected to
	$R_j z$. Specifically, we seek a uniform structural constant $L \geqslant 1$
	that bounds the multiplicative distortion ratio:
	\begin{equation}
		\label{eq:distortion-goal} \frac{\abs{R_j z - a}}{\abs{z - a}} \leqslant L.
	\end{equation}
	To establish this bound, note that the cutoff support $K_j$ is a compact set
	strictly contained inside the open cylinder $C_j$ and, therefore, $\delta_j
	:= \mathrm{dist} (K_j, \R^n \setminus C_j) > 0$. In particular,
	if a pole $a$ is located completely outside the cylinder, no point $z \in
	K_j$ can ever be closer to $a$ than this distance $\delta_j$.
	
	We now bound the vertical depth of the point $z$ below the boundary graph,
	which is precisely the quantity $\varphi_j (z') - z_n$. We consider two
	geometric cases for the location of the pole $a \in \Omega$: {\smallskip}
	
	{\noindent}{\emph{Case 1: The pole is inside the cylinder ($a \in \Omega
			\cap C_j$).}}
	Since $a$ is inside the domain, it sits strictly above the boundary graph,
	meaning $a_n > \varphi_j (a')$. We can connect $z$ to $a$ by adding and
	subtracting $\varphi_j (a')$ and $a_n$ to the depth:
	\begin{eqnarray}
		\label{eq:extension-vertical-distance-case1} \varphi_j (z') - z_n & = &
		(\varphi_j (z') - \varphi_j (a')) + (\varphi_j (a') - z_n) \nonumber\\
		& < & M_j \abs{z' - a'} + (a_n - z_n) \nonumber\\
		& \leqslant & M_j \abs{z - a} + \abs{z - a} = (M_j + 1) \abs{z - a} . 
	\end{eqnarray}
	Here, we used the Lipschitz continuity of the graph for the first term, and
	the geometric condition $\varphi_j (a') < a_n$ for the second.{\smallskip}
	
	{\noindent}{\emph{Case 2: The pole is outside the cylinder ($a \notin
			C_j$).}}
	By the definition of the cylinder $C_j$, the vertical depth of any point
	inside it is strictly bounded by the half-height $h_j$. Because $z \in K_j$
	and $a \notin C_j$, the buffer separation ensures $\abs{z - a} \geqslant
	\delta_j$. Multiplying $h_j$ by $1 \leqslant \abs{z - a} / \delta_j$ yields:
	\begin{equation}
		\label{eq:extension-vertical-distance-case2} \varphi_j (z') - z_n \overset{\eqref{eq:extension-graph-cylinder}}{<}h_j
		\overset{\abs{z-a}\geqslant\delta_j}{\leqslant} \frac{h_j}{\delta_j} \abs{z - a} .
	\end{equation}
	In either case, the vertical depth of $z$ below the boundary graph is
	linearly controlled by its distance to the pole $a$. Because the reflection
	$R_j$ acts strictly along the vertical fibers, the total displacement vector
	$R_j z - z$ is purely vertical, with a magnitude exactly twice this depth:
	$\abs{R_j z - z} = 2 (\varphi_j (z') - z_n)$.
	
	Applying the triangle inequality and substituting the bounds from the two
	cases yields:
	\begin{equation}\label{eq:reflection-distance}
		\abs{R_j z - a} \leqslant \abs{R_j z - z} + \abs{z - a} \overset{{\eqref{eq:extension-vertical-distance-case1},\eqref{eq:extension-vertical-distance-case2}}}{\leqslant} L \abs{z
			- a},
	\end{equation}
	where $L := \max_{1 \leqslant j \leqslant m} \{2 M_j + 3, 1 + 2 h_j /
	\delta_j \} \geqslant 1$. This $L$ is precisely the uniform structural
	constant required by \eqref{eq:distortion-goal}. Because it is constructed
	exclusively from the fixed atlas and cutoffs, $L$ is
	strictly independent of the evaluation point $z$, the pole $a$, and the
	local patch index $j$.
	
	As a direct consequence we get \eqref{eq:reflection-weight}: Since the
	exponent satisfies $\alpha \in [0, n)$ and the structural constant satisfies
	$L \geqslant 1$, taking the inverse $\alpha$-power of the distance bound
	yields:
	\begin{equation}
		\frac{1}{\abs{z - a}^{\alpha}} \overset{\eqref{eq:reflection-distance}}{\leqslant} \frac{L^{\alpha}}{\abs{R_j z -
				a}^{\alpha}} \overset{0\leqslant\alpha<n,\ L\geqslant1}{\leqslant} \frac{L^n}{\abs{R_j z - a}^{\alpha}}  \quad
		\text{for \aev{} } z \in K_j \cap C_j^- .
	\end{equation}
}
	{\noindent}\emph{Step 4: The global weighted bound.} Our final goal is
	to assemble the localized estimates to prove the global bound
	\eqref{eq:weighted-extension}. To shorten notation, let $G_q (z) :=
	\abs{v (z)}^q + \abs{\nabla v (z)}^q$ and define the interior weighted
	integral
	
	\begin{equation}\label{eq:extension-interior-energy}
		I := \int_{\Omega} \frac{G_q (z)}{\abs{z - a}^{\alpha}}
		\hspace{0.17em} \dd z.
	\end{equation}
	In terms of this shorthand, our overall objective is to demonstrate that
	\begin{equation}
		\int_{\R^n} \frac{\abs{(E  v) (z)}^q + \abs{\nabla (E  v)
				(z)}^q}{\abs{z - a}^{\alpha}} \hspace{0.17em} \dd z \leqslant C_E I.
	\end{equation}
	If $I=+\infty$, the estimate is immediate. We may therefore assume
	$I<\infty$. The proof reduces to
	decomposing the global integral into its interior and exterior components,
	estimating each boundary piece individually in terms of $I$, and finally
	synthesizing these pieces.{\smallskip}
	
	{\noindent}{\emph{Decomposition.}} Because $\Omega$ is Lipschitz, the
	boundary $\partial\Omega$ has Lebesgue measure zero. We split the
	integral into its contributions over $\Omega$ and
	$\R^n\setminus\Omega$. Since $Ev=v$ and
	$\nabla Ev=\nabla v$ \aev{} in $\Omega$, the interior contribution is $I$.
	Outside $\Omega$, the interior piece vanishes ($e_0 v =
	0$), meaning the global extension reduces purely to the sum of the $m$
	boundary pieces: $Ev = \sum_{j = 1}^m e_j v$. Applying the algebraic
	convexity bound $(\sum_{j = 1}^m t_j)^q \leqslant m^{q - 1}  \sum_{j = 1}^m
	t_j^q \leqslant m^{P - 1}  \sum_{j = 1}^m t_j^q$ to both the function and
	its gradient, we decompose the exterior integral into a sum over the local
	exterior regions $C_j^-$:
	\begin{equation}
		\label{eq:exterior-decomposition} \int_{\R^n \setminus \Omega}
		\frac{\abs{(Ev) (z)}^q + \abs{\nabla (Ev) (z)}^q}{\abs{z - a}^{\alpha}}
		\hspace{0.17em} \dd z \overset{\eqref{eq:extension-operator-definition}}{\leqslant} m^{P - 1}  \sum_{j = 1}^m \int_{C_j^-}
		\frac{\abs{(e_j v) (z)}^q + \abs{\nabla (e_j v) (z)}^q}{\abs{z -
				a}^{\alpha}} \hspace{0.17em} \dd z.
	\end{equation}
	{\emph{Local Estimation.}} Next, we estimate a single local piece from this
	sum. Using the algebraic convexity bound $(s + t)^q \leqslant 2^{q - 1} 
	(s^q + t^q)$ along with the pointwise gradient control
	\eqref{eq:reflection-gradient} and the weight bound
	\eqref{eq:reflection-weight}, we change variables via the measure-preserving
	reflection $y=R_jz$ (with $\dd y=\dd z$). Since $e_jv$ and its weak
	gradient vanish \aev{} outside $K_j$, we obtain:
	\begin{eqnarray}
		\int_{C_j^-} \frac{\abs{(e_j v) (z)}^q + \abs{\nabla (e_j v)
				(z)}^q}{\abs{z - a}^{\alpha}} \hspace{0.17em} \dd z &
		\overset{{\eqref{eq:reflection-gradient},\eqref{eq:reflection-weight}}}{\leqslant} & 2^{q - 1} b^q L^n 
		\int_{K_j \cap C_j^-} \frac{G_q  (R_j z)}{\abs{R_j z - a}^{\alpha}}
		\hspace{0.17em} \dd z \nonumber\\
		& \overset{\substack{\eqref{eq:reflection-jacobian}\\y=R_jz,\ q\leqslant P}}{\leqslant} & 2^{P - 1} b^P L^n 
		\int_{C_j^+} \frac{G_q (y)}{\abs{y - a}^{\alpha}} \hspace{0.17em} \dd y
		\nonumber\\
		& \overset{\eqref{eq:extension-interior-energy}}{\leqslant} & 2^{P - 1} b^P L^n I.  \label{eq:extension-local-weighted}
	\end{eqnarray}
	In the second step of \eqref{eq:extension-local-weighted}, we enlarged the
	integration domain from the reflected image $R_j  (K_j \cap C_j^-)$ to the
	full upper cylinder $C_j^+ \subset \Omega$. Crucially, because $b \geqslant
	1$ and $q \in [1, P]$, we uniformly bounded the $q$-dependent constants by
	their suprema at $q = P$.{\smallskip}
	
	{\noindent}{\emph{Synthesis.}} We now return to the original integral.
	Substituting the individual local estimates
	\eqref{eq:extension-local-weighted} back into the exterior decomposition
	\eqref{eq:exterior-decomposition} yields:
	\begin{equation}\label{eq:extension-exterior-bound}
		\int_{\R^n \setminus \Omega} \frac{\abs{(Ev) (z)}^q + \abs{\nabla
				(Ev) (z)}^q}{\abs{z - a}^{\alpha}} \hspace{0.17em} \dd z
		\overset{\eqref{eq:exterior-decomposition},
			\eqref{eq:extension-local-weighted}}{\leqslant} m^{P - 1}  \sum_{j = 1}^m
		(2^{P - 1} b^P L^n I) = 2^{P - 1} m^P b^P L^n I.
	\end{equation}
	Adding the interior contribution ($I$) to this exterior bound finally
	synthesizes the full global estimate:
	\begin{equation}
		\label{eq:extension-uniform-constant} \int_{\R^n} \frac{\abs{Ev
				(z)}^q + \abs{\nabla Ev (z)}^q}{\abs{z - a}^{\alpha}} \hspace{0.17em} \dd
		z \overset{\eqref{eq:extension-exterior-bound}}{\leqslant} \underbrace{(1 + 2^{P - 1} m^P b^P L^n)}_{C_E} I.
	\end{equation}
	Because the constant $C_E$ depends only on $P$ and the fixed geometric
	parameters $m, b$, and $L$, it is strictly independent of $q \in [1, P]$,
	the pole $a \in \Omega$, and the decay power $\alpha \in [0, n)$. Setting $q
	= P = 1$ and $\alpha = 0$ additionally confirms that $E$ maps boundedly into
	$W^{1, 1} (\R^n)$.
\end{proof}

Homogeneous extension results are available under suitable geometric
hypotheses; see, for example, \cite{Chua1992}. The specific localized
operator $E$ constructed in Lemma~\ref{lem:weighted-tools}, however, has
compact support. Consequently, a pure seminorm bound
\begin{equation}
	\int_{\R^n}\frac{\abs{\nabla Ev(z)}^q}{\abs{z-a}^{\alpha}}
	\hspace{0.17em}\dd z
	\leqslant C\int_{\Omega}\frac{\abs{\nabla v(z)}^q}{\abs{z-a}^{\alpha}}
	\hspace{0.17em}\dd z
\end{equation}
cannot hold for arbitrary functions. Indeed, applying it to $v\equiv1$
would give $\nabla E1=0$ \aev{} in $\R^n$. Thus $E1$ would be
constant, contradicting both $E1=1$ on $\Omega$ and its compact support.

The following corollary demonstrates that we can bypass this limitation. By
restricting our attention to functions with zero unweighted mean
($\mean{v} = 0$), we can use the uniform weighted Poincar{\'e}
inequality (Lemma~\ref{lem:unweighted-mean}) to completely absorb the $L^q$
norm into the gradient seminorm. This yields the desired
gradient-only extension estimate without requiring an entirely new homogeneous
extension construction.

\begin{corollary}[Uniform gradient control]
	\label{cor:gradient-extension}Let $E$ be the extension operator from
	Lemma~\ref{lem:weighted-tools}. For every $\alpha \in [0, n)$ and $P \in [1,
	\infty)$, there exists $C = C (n, \Omega, P, \alpha) > 0$ such that
	\begin{equation}
		\label{eq:uniform-extension} \int_{\R^n} \frac{\abs{\nabla (Ev)
				(z)}^q}{\abs{z - a}^{\alpha}} \hspace{0.17em} \dd z \leqslant C
		\int_{\Omega} \frac{\abs{\nabla v (z)}^q}{\abs{z - a}^{\alpha}}
		\hspace{0.17em} \dd z
	\end{equation}
	for all $a \in \Omega$, $q \in [1, P]$, and $v \in W^{1, 1} (\Omega)$
	satisfying the zero unweighted mean condition $\mean{v} = 0$.
\end{corollary}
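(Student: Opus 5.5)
The plan is to combine the weighted extension bound of Lemma~\ref{lem:weighted-tools} with the uniform weighted Poincar\'e inequality of Lemma~\ref{lem:unweighted-mean}; the mean-zero hypothesis is what lets us absorb the zeroth-order term. Fix $\alpha\in[0,n)$, $P\in[1,\infty)$, a pole $a\in\Omega$, an exponent $q\in[1,P]$, and $v\in W^{1,1}(\Omega)$ with $\mean{v}=0$. If the right-hand side of \eqref{eq:uniform-extension} is infinite, there is nothing to prove, so we may assume it is finite.

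\emph{Step 1: drop the function term on the left.} Since $\abs{\nabla(Ev)}^q\leqslant\abs{Ev}^q+\abs{\nabla(Ev)}^q$, Lemma~\ref{lem:weighted-tools} gives
\[
\int_{\R^n}\frac{\abs{\nabla(Ev)(z)}^q}{\abs{z-a}^{\alpha}}\,\dd z
\leqslant C_E\int_\Omega\frac{\abs{v(z)}^q}{\abs{z-a}^{\alpha}}\,\dd z
+C_E\int_\Omega\frac{\abs{\nabla v(z)}^q}{\abs{z-a}^{\alpha}}\,\dd z,
\]
where $C_E=C_E(n,\Omega,P)$ does not depend on $a$, $\alpha$ or $q$.

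\emph{Step 2: absorb the zeroth-order term.} Because $\mean{v}=0$, the first integral on the right equals $\int_\Omega\abs{v-\mean{v}}^q w_a\,\dd z$ with $w_a(z)=\abs{z-a}^{-\alpha}$. By Lemma~\ref{lem:power-A1}, $[w_a]_{A_1}\leqslant K(n,\alpha)$ uniformly in the pole $a$. Lemma~\ref{lem:unweighted-mean}, applied with this $K$ and with $P$, therefore bounds that integral by $C_P(n,\Omega,P,K(n,\alpha))\int_\Omega\abs{\nabla v}^q w_a\,\dd z$, uniformly in $q\in[1,P]$ and $a\in\Omega$. Both lemmas allow the value $+\infty$, so no a priori integrability of $v$ is needed.

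\emph{Step 3: assemble the constant.} Combining the two steps gives \eqref{eq:uniform-extension} with $C=C_E\,(1+C_P)$, which depends only on $n$, $\Omega$, $P$ and $\alpha$. The only point requiring care is the uniformity of the $A_1$ constant in the pole, and Lemma~\ref{lem:power-A1} already provides it. The remaining detail is the degenerate case $\alpha=0$, where $w_a\equiv1$: there $[w_a]_{A_1}=1$, so the same argument applies unchanged. I therefore do not expect a real obstacle; the corollary is a direct composition of the two earlier lemmas.
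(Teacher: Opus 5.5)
Your proposal is correct and matches the paper's proof. You discard $\abs{Ev}^q$, apply Lemma~\ref{lem:weighted-tools}, and absorb $\int_\Omega\abs{v}^q\abs{z-a}^{-\alpha}\,\dd z$ via Lemma~\ref{lem:unweighted-mean}, using $\mean{v}=0$ and the pole-uniform $A_1$ bound of Lemma~\ref{lem:power-A1}; this gives the same constant $C=C_E(1+C_P)$, and the paper likewise first reduces to a finite right-hand side.
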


\begin{proof}
	We must prove that the gradient seminorm of the extension is controlled
	solely by the gradient seminorm of the original function. To shorten
	notation, we set
	\begin{equation}\label{eq:gradient-interior-energy}
		J := \int_{\Omega} \frac{\abs{\nabla v (z)}^q}{\abs{z - a}^{\alpha}}
		\hspace{0.17em} \dd z.
	\end{equation}
	We may assume $J < \infty$, as the inequality is trivially true otherwise.
	
	Because $\alpha \in [0, n)$, the weight $\abs{z - a}^{- \alpha}$ is an $A_1$
	weight whose characteristic is bounded by a translation-invariant constant
	$K(n,\alpha)$ by Lemma~\ref{lem:power-A1}; see also
	\cite[Example~7.1.7, p.~506]{GrafakosClassical2014}. Thus, the uniform
	weighted Poincar{\'e} inequality
	(Lemma~\ref{lem:unweighted-mean}) applies with a constant $C_P = C_P  (n,
	\Omega, P, \alpha)$ that is strictly independent of the pole $a$ and the
	exponent $q \in [1, P]$.
	
	Since $v \in W^{1, 1} (\Omega)$ and $\mean{v} = 0$,
	Lemma~\ref{lem:unweighted-mean} immediately guarantees that $v$ belongs to
	the weighted space and satisfies:
	\begin{equation}
		\label{eq:extension-poincare-v} \int_{\Omega} \frac{\abs{v (z)}^q}{\abs{z
				- a}^{\alpha}} \hspace{0.17em} \dd z \overset{\mean{v}=0}{=}\int_{\Omega} \frac{\abs{v (z) -
				\mean{v}}^q}{\abs{z - a}^{\alpha}} \hspace{0.17em} \dd z
		\overset{\eqref{eq:unweighted-mean}}{\leqslant} C_P  \int_{\Omega} \frac{\abs{\nabla v (z)}^q}{\abs{z -
				a}^{\alpha}} \hspace{0.17em} \dd z = C_P J.
	\end{equation}
	We substitute this bound directly into the full extension estimate
	\eqref{eq:weighted-extension}. Discarding the non-negative term $\abs{E 
		v}^q$ on the left-hand side, we obtain:
	\begin{eqnarray}
		\int_{\R^n} \frac{\abs{\nabla (E  v) (z)}^q}{\abs{z - a}^{\alpha}}
		\hspace{0.17em} \dd z & \leqslant & \int_{\R^n} \frac{\abs{(E  v)
				(z)}^q + \abs{\nabla (Ev) (z)}^q}{\abs{z - a}^{\alpha}}
		\hspace{0.17em} \dd z \nonumber\\
		& \overset{\eqref{eq:weighted-extension}}{\leqslant} & C_E  \int_{\Omega} \frac{\abs{v (z)}^q + \abs{\nabla v
				(z)}^q}{\abs{z - a}^{\alpha}} \hspace{0.17em} \dd z \nonumber\\
		& \overset{\eqref{eq:extension-poincare-v}}{\leqslant} & C_E  (C_P J + J) .
		\label{eq:gradient-extension-final}
	\end{eqnarray}
	Factoring out $J$ yields the desired estimate \eqref{eq:uniform-extension}
	with the uniform constant $C = C_E  (1 + C_P)$.
\end{proof}
	
\section{The main theorem}\label{sec:main}

\begin{theorem}
	\label{thm:main}Let $\Omega\subset\R^n$ be a bounded Lipschitz
	domain, and let $B\subset\R^n$ be an open ball of diameter
	$D:=\diam(B)$ such that $\bar\Omega\subset B$. Let
	$p:\Omega\to[1,\infty)$ be measurable, with
	$1\leqslant p^-\leqslant p(x)\leqslant p^+<\infty$ \aev{}
	Let $\theta\in[0,1)$ and suppose that the measurable kernel
	$\kernel:\R^n\times\R^n\to[0,\infty)$ admits a
	measurable radial majorant $\kappa:\Omega\times(0,\infty)\to[0,\infty)$
	in the sense of \eqref{eq:radial-bound}.
	Define the radial coefficient
	\begin{equation}
		\label{eq:main-capacity}
		c(x):=\int_0^D r^{n+p(x)-2+\theta}\kappa(x,r)
		\hspace{0.17em}\dd r,\qquad x\in\Omega.
	\end{equation}
	There exists a constant $C=C(n,p^+,\theta,\Omega)>0$ such that, for every
	$u\in W^{1,1}(\Omega)$, in particular every
	$u\in W^{1,p(\cdot)}(\Omega)$,
	\begin{equation}
		\label{eq:main}
		\iint_{\Omega\times\Omega}\kernel(x,y)\abs{u(x)-u(y)}^{p(x)}
		\hspace{0.17em}\dd y\hspace{0.17em}\dd x
		\leqslant C\int_\Omega c(x)\left(
		\int_\Omega\frac{\abs{\nabla u(z)}^{p(x)}}{\abs{z-x}^{n-1+\theta}}
		\hspace{0.17em}\dd z\right)\dd x.
	\end{equation}
	Both sides are understood in $[0,\infty]$, with $0\cdot\infty=0$, as in
	Section~\ref{sec:prelim}. The constant $C$ is independent of $B$, $p$
	except through $p^+$, and the kernel.
\end{theorem}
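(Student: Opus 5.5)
By translation invariance of both sides, we first reduce to $\mean{u}=0$, setting $v:=u$. We then let $\tilde u:=Ev\in W^{1,1}(\R^n)$ be the extension from Lemma~\ref{lem:weighted-tools}, restricted to the ball $B$. The ball $B$ is convex, hence starshaped with respect to every point of $\Omega\subset B$, and its diameter is $D$. We apply Lemma~\ref{lem:morrey} with $U=B$, $S=\Omega$, the given exponent $p$ on $\Omega$, the kernel $\kernel$ and the radial bound $\kappa$, to the function $\tilde u\in W^{1,1}_{\mathrm{loc}}(B)$. Since $\tilde u=u$ a.e.\ in $\Omega$, we may choose a representative of $\tilde u$ that agrees with the fixed representative of $u$ on $\Omega$ (only a null set is affected). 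The lemma then yields, for a.e.\ $x\in\Omega$,
\[
\int_\Omega\kernel(x,y)\abs{u(x)-u(y)}^{p(x)}\dd y\leqslant\int_B\kernel(x,y)\abs{\tilde u(x)-\tilde u(y)}^{p(x)}\dd y\leqslant c(x)\int_B\frac{\abs{\nabla\tilde u(z)}^{p(x)}}{\abs{z-x}^{n-1+\theta}}\dd z .
\]
This holds whenever $c(x)<\infty$. If $c(x)=\infty$, the right-hand side of \eqref{eq:main} at $x$ is either $\infty$, or it is $0$ because the inner gradient integral vanishes; in the latter case $\nabla u=0$ a.e., so $u$ is constant on the connected set $\Omega$ and the left-hand side vanishes as well. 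We therefore treat that case separately via the convention $0\cdot\infty=0$.

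Next, for fixed $x\in\Omega$ we control the extended gradient integral. We apply Corollary~\ref{cor:gradient-extension} with pole $a=x\in\Omega$, weight exponent $\alpha=n-1+\theta\in[0,n)$, integration exponent $q=p(x)\in[1,p^+]$, and $P=p^+$. Because $\mean{v}=0$, this gives
\[
\int_B\frac{\abs{\nabla\tilde u(z)}^{p(x)}}{\abs{z-x}^{n-1+\theta}}\dd z\leqslant C(n,\Omega,p^+,\theta)\int_\Omega\frac{\abs{\nabla u(z)}^{p(x)}}{\abs{z-x}^{n-1+\theta}}\dd z .
\]
The essential point is that the operator $E$ is a single fixed operator, chosen before $x$ and $p(x)$, and that the constant is uniform in the pole $a$ and in $q\in[1,p^+]$. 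We may therefore substitute $q=p(x)$ pointwise without any regularity of $p$. Multiplying by $c(x)$ and integrating over $x\in\Omega$ with Tonelli's theorem gives \eqref{eq:main} with $C=C_E(1+C_P)$. This constant depends only on $n$, $\Omega$, $p^+$ and $\theta$, and in particular not on $B$, on the kernel, or on $p$ beyond $p^+$.

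The main obstacle is the quantifier and measurability bookkeeping. The exceptional null set in Lemma~\ref{lem:morrey} comes from Lemma~\ref{lem:ACL-radial} applied to $\tilde u$ on $B$, and it must be chosen independently of $x$ and $p$, which is exactly what that lemma provides. We also need the identification $\tilde u=u$ on $\Omega$ to hold for the fixed representatives, so that the left-hand side is unchanged; this is done by modifying $\tilde u$ on a null subset of $\Omega$ before invoking Lemma~\ref{lem:ACL-radial}. Finally, the map $x\mapsto c(x)\int_\Omega\abs{\nabla u(z)}^{p(x)}\abs{z-x}^{1-n-\theta}\dd z$ must be measurable, which follows from Tonelli's theorem applied to the jointly measurable integrand.
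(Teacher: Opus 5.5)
Your proposal is correct and follows essentially the same route as the paper's proof. You normalize to zero mean, extend once with the fixed operator $E$ of Lemma~\ref{lem:weighted-tools}, apply Lemma~\ref{lem:morrey} on $U=B$ with $S=\Omega$, and bound the extended weighted gradient integral by Corollary~\ref{cor:gradient-extension} with $a=x$ and $q=p(x)$; your handling of $c(x)=\infty$ (either the inner gradient integral is positive, or $u$ is constant and both sides vanish) is the same dichotomy the paper uses, only phrased pointwise instead of first discarding constant $u$.
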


\begin{proof}
	We subtract the mean, extend once, and apply Lemma~\ref{lem:morrey}
	on the containing ball. Set $\alpha:=n-1+\theta\in[0,n)$,
	$P:=p^+$, $v:=u-\mean{u}$, and $\tilde v:=Ev$, where $E$
	is the operator of Lemma~\ref{lem:weighted-tools}. Then
	$\tilde v\in W^{1,1}(\R^n)$, $\tilde v=v$ \aev{} on $\Omega$,
	$\mean{v}=0$, and $\nabla v=\nabla u$.
	
	If $u$ is constant \aev{}, both sides of \eqref{eq:main} vanish.
	Otherwise, connectedness of $\Omega$ implies that $\abs{\nabla u}>0$
	on a set of positive measure; see \cite[Remark~7, p.~269]{Brezis2011}.
	Consequently, the measurable function
	\begin{equation}
		\label{eq:main-inner-potential}
		I(x):=\int_\Omega
		\frac{\abs{\nabla u(z)}^{p(x)}}{\abs{z-x}^{\alpha}}
		\hspace{0.17em}\dd z
	\end{equation}
	is strictly positive. By Corollary~\ref{cor:gradient-extension},
	with $a=x$ and $q=p(x)\in[1,P]$,
	\begin{equation}
		\label{eq:main-extension-bound}
		\int_B\frac{\abs{\nabla\tilde v(z)}^{p(x)}}{\abs{z-x}^{\alpha}}
		\hspace{0.17em}\dd z
		\leqslant\int_{\R^n}
		\frac{\abs{\nabla\tilde v(z)}^{p(x)}}{\abs{z-x}^{\alpha}}
		\hspace{0.17em}\dd z
		\overset{\eqref{eq:uniform-extension}}{\leqslant}CI(x)
	\end{equation}
	for \aev{} $x\in\Omega$, with $C=C(n,P,\theta,\Omega)$.
	
	Apply Lemma~\ref{lem:morrey} directly to $\kernel$, with $U=B$,
	$S=\Omega$, and the fixed function $\tilde v$. The radial bound
	\eqref{eq:radial-bound} holds for \aev{} $y\in B$ and \aev{} $x\in\Omega$.
	Since $\tilde v=v$ \aev{} on $\Omega$, for \aev{} $x\in\Omega$ with
	$c(x)<\infty$ we obtain
	\begin{eqnarray}
		\label{eq:pointwise-final}
		\int_\Omega\kernel(x,y)\abs{u(x)-u(y)}^{p(x)}
		\hspace{0.17em}\dd y
		&\overset{\Omega\subset B}{\leqslant}&
		\int_B\kernel(x,y)\abs{\tilde v(x)-\tilde v(y)}^{p(x)}
		\hspace{0.17em}\dd y\nonumber\\
		&\overset{\eqref{eq:morrey-global}}{\leqslant}&
		c(x)\int_B\frac{\abs{\nabla\tilde v(z)}^{p(x)}}{\abs{z-x}^{\alpha}}
		\hspace{0.17em}\dd z\\
		&\overset{\eqref{eq:main-extension-bound}}{\leqslant}&
		Cc(x)I(x).\nonumber
	\end{eqnarray}
	Here Lemma~\ref{lem:morrey} already allows the measurable exponent
	$p(x)$: its proof uses the single full-measure set from
	Lemma~\ref{lem:ACL-radial} for $\tilde v$, independently of the exponent.
	Thus no intersection of exponent-dependent full-measure sets is needed.
	
	The chain \eqref{eq:pointwise-final} includes $c(x)=0$ under the
	convention $0\cdot\infty=0$. If $c(x)=\infty$, then
	$c(x)I(x)=\infty$, since $I(x)>0$, so its first and last terms still
	satisfy the required inequality. Integration in $x$ and Tonelli's
	theorem give \eqref{eq:main}. The constant is independent of $B$;
	its diameter enters only through \eqref{eq:main-capacity}.
\end{proof}

\begin{remark}
	\label{rem:diam-omega}For every $\varepsilon>0$, one may choose $B$
	with $D=2\diam(\Omega)+\varepsilon$: centre $B$ at any point of
	$\Omega$ and take radius $\diam(\Omega)+\varepsilon/2$.
	Then $\bar\Omega\subset B$, and $0<\varepsilon\leqslant\diam(\Omega)$
	makes $D$ comparable to $\diam(\Omega)$. For the fractional kernel
	of Corollary~\ref{cor:fractional}, this changes the constant but not
	the integrability condition at $r=0$. For the regularized kernel
	of Corollary~\ref{cor:regularised}, the finite cutoff makes the
	decay condition \eqref{eq:reg-condition} unnecessary.
\end{remark}

The computation of the radial coefficient in Corollary~\ref{cor:fractional}, now with
$D=\diam(B)$, gives the fractional Gagliardo-type bound on Lipschitz
domains.

\begin{corollary}
	\label{cor:variable-gagliardo}Let $\Omega\subset\R^n$ be a bounded
	Lipschitz domain, $p\in L^\infty(\Omega)$ with
	$1\leqslant p^-\leqslant p(x)\leqslant p^+<\infty$ \aev{}, and
	$\lambda\in(0,1)$. Suppose that $\theta\in[0,1)$ satisfies
	$\theta>1-(1-\lambda)p^-$. There exists a constant
	$C=C(n,\lambda,\theta,p^-,p^+,\Omega)>0$ such that, for every
	$u\in W^{1,1}(\Omega)$, in particular every
	$u\in W^{1,p(\cdot)}(\Omega)$,
	\begin{equation}
		\iint_{\Omega\times\Omega}
		\frac{\abs{u(x)-u(y)}^{p(x)}}{\abs{x-y}^{n+\lambda p(x)}}
		\dd y\hspace{0.17em}\dd x
		\leqslant C\iint_{\Omega\times\Omega}
		\frac{\abs{\nabla u(z)}^{p(x)}}{\abs{z-x}^{n-1+\theta}}
		\dd z\hspace{0.17em}\dd x.
	\end{equation}
\end{corollary}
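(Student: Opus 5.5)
The plan is to apply Theorem~\ref{thm:main} to the kernel $\kernel(x,y)=\abs{x-y}^{-n-\lambda p(x)}$, with the radial majorant $\kappa(x,r)=r^{-n-\lambda p(x)}$ for $x\in\Omega$ and $r>0$. We set the kernel to zero on the diagonal and for $x\notin\Omega$, as in Corollary~\ref{cor:fractional}. Since $p$ is measurable, $\kappa$ is measurable on $\Omega\times(0,\infty)$, and \eqref{eq:radial-bound} holds with equality off the diagonal. We fix once and for all a ball $B\supset\bar\Omega$ depending only on $\Omega$, for instance the one from Remark~\ref{rem:diam-omega} with $\varepsilon=\diam(\Omega)$, so that $D=3\diam(\Omega)$ depends only on $\Omega$.

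The key step is to bound the radial coefficient \eqref{eq:main-capacity} uniformly. Exactly as in \eqref{eq:convex-fractional-capacity},
\[
c(x)=\int_0^D r^{(1-\lambda)p(x)+\theta-2}\,\dd r=\frac{D^{(1-\lambda)p(x)+\theta-1}}{(1-\lambda)p(x)+\theta-1}.
\]
The integral is finite because the exponent satisfies $(1-\lambda)p(x)+\theta-2\geqslant(1-\lambda)p^-+\theta-2>-1$, by the hypothesis $\theta>1-(1-\lambda)p^-$. The denominator is therefore at least $\delta:=(1-\lambda)p^-+\theta-1>0$. The numerator is at most $\max\{D^{(1-\lambda)p^-+\theta-1},D^{(1-\lambda)p^++\theta-1}\}$, handling separately the cases $D\geqslant1$ and $D<1$. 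Hence $c(x)\leqslant C_1(n,\lambda,\theta,p^-,p^+,\Omega)$ for a.e. $x\in\Omega$.

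Finally, we insert this into \eqref{eq:main}. Theorem~\ref{thm:main} gives a constant $C(n,p^+,\theta,\Omega)$, and we bound $c(x)$ by $C_1$ inside the outer integral, which is legitimate since the integrands are nonnegative and the estimate holds in $[0,\infty]$. This yields the claim with constant $C\,C_1$, valid for every $u\in W^{1,1}(\Omega)$ and hence for every $u\in W^{1,p(\cdot)}(\Omega)$.

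No real obstacle is expected. The only point needing care is that the constant must not depend on $B$, which is why $B$ is fixed in terms of $\Omega$ at the start, and that the uniformity of $c$ requires the strict inequality on $\theta$.
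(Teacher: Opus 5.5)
Your proposal is correct and follows essentially the same route as the paper. The paper's proof also fixes the containing ball via Remark~\ref{rem:diam-omega}, computes $c(x)=D^{\beta(x)}/\beta(x)$ with $\beta(x)=(1-\lambda)p(x)-1+\theta\geqslant\beta^->0$, bounds it by $\max\{D^{\beta^-},D^{\beta^+}\}/\beta^-$, and then inserts this bound into \eqref{eq:main}.
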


\begin{proof}
	Fix a containing ball as in Remark~\ref{rem:diam-omega}, with diameter
	determined by $\Omega$. Take the kernel of Corollary~\ref{cor:fractional},
	with radial majorant $\kappa(x,r)=r^{-n-\lambda p(x)}$.
	Set $\beta(x):=(1-\lambda)p(x)-1+\theta$ and
	$\beta^\pm:=(1-\lambda)p^\pm-1+\theta$. Since
	$0<\beta^-\leqslant\beta(x)\leqslant\beta^+$ \aev{},
	\begin{equation}
		\label{eq:fractional-capacity}
		c(x)\overset{\eqref{eq:main-capacity}}{=}
		\int_0^D r^{\beta(x)-1}\hspace{0.17em}\dd r
		=\frac{D^{\beta(x)}}{\beta(x)}
		\leqslant\frac{\max\{D^{\beta^-},D^{\beta^+}\}}{\beta^-}.
	\end{equation}
	The conclusion follows from \eqref{eq:main} and
	\eqref{eq:fractional-capacity}.
\end{proof}

{\color{black}
\begin{remark}[Luxemburg-gauge consequence]\label{rem:luxemburg-consequence}
Under the hypotheses of Theorem~\ref{thm:main}, put
\begin{align*}
\mathcal A(u)&=\iint_{\Omega\times\Omega}\kernel(x,y)\abs{u(x)-u(y)}^{p(x)}\,\dd y\,\dd x,\\
\mathcal B(F)&=\int_\Omega c(x)\int_\Omega
\frac{\abs{F(z)}^{p(x)}}{\abs{z-x}^{n-1+\theta}}\,\dd z\,\dd x,
\end{align*}
and define the possibly infinite Luxemburg gauges
\[
[u]_{\mathcal A}=\inf\{t>0:\mathcal A(u/t)\leqslant1\},\qquad
\|F\|_{\mathcal B}=\inf\{t>0:\mathcal B(F/t)\leqslant1\},
\]
with $\inf\varnothing=\infty$. Then
\begin{equation}\label{eq:luxemburg-consequence}
[u]_{\mathcal A}\leqslant
\max\{C^{1/p^-},C^{1/p^+}\}\,\|\nabla u\|_{\mathcal B}.
\end{equation}
Indeed, set $a=\max\{C^{1/p^-},C^{1/p^+}\}$. For every $t>0$ with
$\mathcal B(\nabla u/t)\leqslant1$, apply the theorem to $u/(at)$ and use
$a^{-p(x)}\leqslant C^{-1}$ to obtain $\mathcal A(u/(at))\leqslant1$.
Taking the infimum over such $t$ proves the claim; if there is no such $t$,
the right-hand side is infinite. This also covers a zero right-hand gauge
by taking arbitrarily small admissible $t$.
For the fractional kernel, $[u]_{\mathcal A}$ is the associated
variable-exponent Gagliardo seminorm. Similarly, Corollary~\ref{cor:modularPW}
gives a bound for $\|u-\mean{u}\|_{p(\cdot)}$ by the gauge obtained from
$\mathcal B$ with $c=1$ and $\theta=0$. These bounds retain the mixed
gradient functional; they do not replace it by $\|\nabla u\|_{p(\cdot)}$.
\end{remark}
}

\begin{remark}
	\label{rem:finite-rhs}Membership in $W^{1,p(\cdot)}(\Omega)$ alone
	need not make the right-hand side finite; see
	Example~\ref{ex:infinite-modular}. A sufficient condition for the
	right-hand side in Corollary~\ref{cor:variable-gagliardo} to be finite
	is $u\in W^{1,p^+}(\Omega)$. Indeed,
	$\abs{\nabla u(z)}^{p(x)}\leqslant1+\abs{\nabla u(z)}^{p^+}$, and
	\begin{equation}\label{eq:lipschitz-kernel-mass}
		\sup_{z\in\Omega}\int_\Omega
		\abs{x-z}^{-(n-1+\theta)}\hspace{0.17em}\dd x
		\overset{\eqref{eq:kernel-mass-theta}}{\leqslant}\frac{\abs{\sphere}\hspace{0.17em}
			\diam(\Omega)^{1-\theta}}{1-\theta}.
	\end{equation}
	Tonelli's theorem gives the claimed finiteness. The same conclusion
	holds in Theorem~\ref{thm:main} if $c\in L^\infty(\Omega)$.
	No such finiteness assumption is required for the estimates themselves.
\end{remark}

At $\theta=0$, taking $\kernel\equiv1$ in Theorem~\ref{thm:main} and
applying Jensen's inequality in $y$ with respect to
$\dd y/\mathcal{L}^n(\Omega)$ gives the modular Poincar\'e--Wirtinger inequality
of \cite{DDFH2024}. We give a shorter proof from
Lemma~\ref{lem:potential-representation}.

\begin{corollary}[Modular Poincar\'e--Wirtinger inequality]
	\label{cor:modularPW}Let $\Omega\subset\R^n$ be a bounded Lipschitz
	domain and let $p\in L^\infty(\Omega)$ with $p\geqslant1$ \aev{}
	There exists a constant $C=C(n,p^+,\Omega)>0$ such that, for every
	$u\in W^{1,1}(\Omega)$, in particular every
	$u\in W^{1,p(\cdot)}(\Omega)$,
	\begin{equation}
		\label{eq:modularPW}
		\int_\Omega\abs{u(x)-\mean{u}}^{p(x)}\dd x
		\leqslant C\iint_{\Omega\times\Omega}
		\frac{\abs{\nabla u(z)}^{p(x)}}{\abs{z-x}^{n-1}}
		\hspace{0.17em}\dd z\hspace{0.17em}\dd x.
	\end{equation}
	For constant $p$, this implies the classical Poincar\'e--Wirtinger
	inequality \eqref{eq:classical-PW}.
\end{corollary}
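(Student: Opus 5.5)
The paper announces ``a shorter proof from Lemma~\ref{lem:potential-representation}'', so I would argue pointwise in $x$ and not pass through Theorem~\ref{thm:main}. Set $v:=u-\mean{u}$, so that $\nabla v=\nabla u$. By Lemma~\ref{lem:potential-representation}, for \aev{} $x\in\Omega$,
\[
\abs{v(x)}\leqslant A_\Omega\int_\Omega\frac{\abs{\nabla u(z)}}{\abs{z-x}^{n-1}}\,\dd z .
\]
The exceptional null set here depends only on $u$, not on $p$, so I can raise both sides to the power $p(x)$ at each such $x$.

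The key step is Jensen's inequality, applied at fixed $x$ with the frozen exponent $q=p(x)\geqslant1$. I use the probability measure $\dd z/(m(x)\abs{z-x}^{n-1})$, exactly as in \eqref{eq:kernel-jensen}, together with the mass bound $m(x)\leqslant a_d=\abs{\sphere}\,d$ from \eqref{eq:kernel-mass}, where $d=\diam(\Omega)$. This gives
\[
\abs{v(x)}^{p(x)}\leqslant A_\Omega^{p(x)}a_d^{\,p(x)-1}\int_\Omega\frac{\abs{\nabla u(z)}^{p(x)}}{\abs{z-x}^{n-1}}\,\dd z .
\]
The case where $\nabla u$ is unbounded is handled by truncating at level $N$ and using monotone convergence, as in Lemma~\ref{lem:unweighted-mean}. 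To make the constant independent of $x$, I bound the $p(x)$-dependent factors by $A_\Omega^{p(x)}\leqslant\max\{A_\Omega,A_\Omega^{p^+}\}$ and $a_d^{p(x)-1}\leqslant\max\{1,a_d^{p^+-1}\}$. Then I integrate in $x$, which is justified by Tonelli's theorem in $[0,\infty]$. This yields \eqref{eq:modularPW} with $C=\max\{A_\Omega,A_\Omega^{p^+}\}\max\{1,a_d^{p^+-1}\}$, which depends only on $n$, $p^+$ and $\Omega$. I would also note, as a consistency check, that the same inequality follows from Theorem~\ref{thm:main} with $\kernel\equiv1$ and $\theta=0$ (where $c(x)\leqslant\max\{D^{n-1},D^{n+p^+-2}\}$), followed by Jensen's inequality in $y$.

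For constant $p$, I apply Tonelli's theorem to the right-hand side and use \eqref{eq:kernel-mass-theta} with $\theta=0$, which bounds it by $C\abs{\sphere}\,d\int_\Omega\abs{\nabla u}^p\,\dd z$. Taking $p$-th roots gives \eqref{eq:classical-PW}. The only delicate point I expect is bookkeeping rather than substance: the null set must be chosen independently of the exponent, and the constant must be uniform down to $p=1$. Both are already secured by Lemma~\ref{lem:potential-representation} and the endpoint bounds above.
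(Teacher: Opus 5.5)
Your proposal is correct and follows the paper's proof essentially step for step. It uses the same ingredients: the pointwise bound of Lemma~\ref{lem:potential-representation}, the kernel form of Jensen's inequality \eqref{eq:kernel-jensen} with mass bound $a_d$, the endpoint bounds yielding $C=\max\{A_\Omega,A_\Omega^{p^+}\}\max\{1,a_d^{p^+-1}\}$, and Tonelli with $\sup_{z}\int_\Omega\abs{x-z}^{1-n}\,\dd x\leqslant a_d$ for constant $p$.

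There is one small slip, in your side consistency check only. With $\kernel\equiv1$ and $\theta=0$ one has $c(x)=D^{n+p(x)-1}/(n+p(x)-1)\leqslant\max\{D^{n},D^{n+p^+-1}\}$, not $\max\{D^{n-1},D^{n+p^+-2}\}$; the latter fails for $D>1$, e.g.\ $n=1$, $p\equiv1$. This does not affect your main argument.
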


\begin{proof}
	Put $d=\diam(\Omega)$, $a_d=\abs{\sphere}\hspace{0.17em}d$, and $P=p^+$.
	The potential estimate of Lemma~\ref{lem:potential-representation}
	and the kernel form of Jensen's inequality give, for \aev{} $x\in\Omega$,
	\begin{eqnarray}
		\abs{u(x)-\mean{u}}^{p(x)}
		&\overset{\eqref{eq:sobolev-rep}}{\leqslant}&
		A_\Omega^{p(x)}\left(\int_\Omega
		\frac{\abs{\nabla u(z)}}{\abs{z-x}^{n-1}}\hspace{0.17em}\dd z
		\right)^{p(x)}\nonumber\\
		&\overset{\eqref{eq:kernel-jensen}}{\leqslant}&
		A_\Omega^{p(x)}a_d^{p(x)-1}\int_\Omega
		\frac{\abs{\nabla u(z)}^{p(x)}}{\abs{x-z}^{n-1}}
		\hspace{0.17em}\dd z.
	\end{eqnarray}
	Both inequalities hold in the extended sense. Integrating in $x$
	proves \eqref{eq:modularPW} with
	\begin{equation}
		\label{eq:modularPW-constant}
		C=\max\{A_\Omega,A_\Omega^P\}\max\{1,a_d^{P-1}\}.
	\end{equation}
	For constant $p$, Tonelli's theorem and
	$\sup_{z\in\Omega}\int_\Omega\abs{x-z}^{1-n}\hspace{0.17em}\dd x
	\leqslant a_d$ give \eqref{eq:classical-PW}.
\end{proof}

\begin{remark}
	Corollary~\ref{cor:modularPW} proves the estimate of
	\cite[Theorem~1.1]{DDFH2024} for every $u\in W^{1,1}(\Omega)$,
	hence for the full variable-exponent Sobolev class. The argument
	uses the potential estimate in Lemma~\ref{lem:potential-representation};
	its constant depends on the fixed Lipschitz atlas and the classical
	$L^1$ Poincar\'e constant through $A_\Omega$.
	The proof in \cite{DDFH2024} instead gives an explicit constant
	in terms of a starshaped decomposition of $\Omega$.
\end{remark}

	{
	
	\section*{Declarations}

	\noindent
	\textit{Conflict of Interest:} The authors declare that they have no conflicts of interest.
	\smallskip

	\section*{Acknowledgements}
	\textsc{G.D.F.} is a member of GNAMPA--INdAM. He gratefully acknowledges partial financial support from the GNAMPA Project CUP\_E53C25002010001, and from the University of Naples Federico II through the FRA Project-B ``VarMoCry'' on  \emph{Variational Analysis and Modeling of Liquid Crystals}.
	
	The research of L.H. has been partially supported by the Austrian Science Fund (FWF) through project  \href{https://doi.org/10.55776/P34609}{10.55776/P34609} on \emph{the Analysis and Modeling of Magnetic Skyrmions}.

}

\section*{\texorpdfstring{\revadd{Declaration of generative AI and AI-assisted technologies}}{Declaration of generative AI and AI-assisted technologies}}
\revadd{During the preparation of this work, the authors used Gemini for language editing and for assistance with TeXmacs-to-LaTeX conversion and typesetting. Gemini was also used to examine mathematical arguments for possible gaps and assist with literature searches. The authors retain responsibility for independently verifying all arguments and references and for the accuracy and integrity of the final manuscript.}


\begin{thebibliography}{99}
	
	\bibitem{BahrouniRadulescu2018}
	A.~Bahrouni, V.~D.~R\u{a}dulescu,
	\emph{On a new fractional Sobolev space and applications
		to nonlocal variational problems with variable exponent},
	Discrete Contin.\ Dyn.\ Syst.\ Ser.\ S
	\textbf{11} (2018), no.~3, 379--389.
	
	\bibitem{BBM2001}
	J.~Bourgain, H.~Brezis, P.~Mironescu,
	\emph{Another look at Sobolev spaces},
	in: Optimal Control and Partial Differential Equations,
	J.~L.~Menaldi, E.~Rofman and A.~Sulem (eds.),
	IOS Press, Amsterdam, 2001, pp.~439--455.
	
	\bibitem{Brezis2011}
	H.~Brezis,
	\emph{Functional Analysis, Sobolev Spaces and
		Partial Differential Equations},
	Universitext, Springer, New York, 2011.
	
	\bibitem{Chua1992}
	S.-K.~Chua,
	\emph{Extension theorems on weighted Sobolev spaces},
	Indiana Univ.\ Math.\ J.\
	\textbf{41} (1992), no.~4, 1027--1076.
	
	\bibitem{CiarletDinca2011}
	P.~G.~Ciarlet, G.~Dinca,
	\emph{A Poincar\'e inequality in a Sobolev space
		with a variable exponent},
	Chinese Ann.\ Math.\ Ser.\ B
	\textbf{32} (2011), no.~3, 333--342.
	
	\bibitem{CruzUribeDiFrattaFiorenza2018}
	D.~V.~Cruz-Uribe, G.~Di Fratta, A.~Fiorenza,
	\emph{Modular inequalities for the maximal operator
		in variable Lebesgue spaces},
	Nonlinear Anal.\
	\textbf{177} (2018), part A, 299--311.
	
	\bibitem{CruzUribeFiorenza2013}
	D.~V.~Cruz-Uribe, A.~Fiorenza,
	\emph{Variable Lebesgue Spaces:
		Foundations and Harmonic Analysis},
	Applied and Numerical Harmonic Analysis,
	Birkh\"auser, Basel, 2013.
	
	\bibitem{CruzUribeLopezGarciaOjea2026}
\revadd{D.~Cruz-Uribe OFS, F.~L\'opez-Garc\'ia, I.~Ojea,
\emph{Poincar\'e and Sobolev inequalities with variable exponents and
log-H\"older continuity only at the boundary},
Potential Anal.\ 	extbf{64} (2026), Paper No.~38.
doi:10.1007/s11118-025-10249-6.}

	\bibitem{DDFH2024}
	E.~Davoli, G.~Di Fratta, A.~Fiorenza, L.~Happ,
	\emph{A modular Poincar\'e--Wirtinger inequality
		for Sobolev spaces with variable exponents},
	Nonlinear Differ.\ Equ.\ Appl.\
	\textbf{31} (2024), no.~5, Paper No.~81.
	Preprint version: arXiv:2304.13132.
	
	\bibitem{DavoliDiFrattaGiorgioPinamonti2025}
	E.~Davoli, G.~Di Fratta, R.~Giorgio, A.~Pinamonti,
	\emph{Necessary and sufficient conditions for the
		Maz'ya--Shaposhnikova formula in (fractional)
		Sobolev spaces},
	Preprint, arXiv:2509.23226 (2025).
	
	\bibitem{DavoliDiFrattaPagliari2026}
	E.~Davoli, G.~Di Fratta, V.~Pagliari,
	\emph{Sharp conditions for the validity of the
		Bourgain--Brezis--Mironescu formula},
	Proc.\ Roy.\ Soc.\ Edinburgh Sect.\ A
	\textbf{156} (2026), no.~1, 69--92.
	doi:10.1017/prm.2024.47.
	
	\bibitem{DHHR2011}
	L.~Diening, P.~Harjulehto, P.~H\"ast\"o,
	M.~R\r{u}\v{z}i\v{c}ka,
	\emph{Lebesgue and Sobolev Spaces with Variable Exponents},
	Lecture Notes in Mathematics \textbf{2017},
	Springer, Heidelberg, 2011.
	
	\bibitem{DiFrattaFiorenza2022}
	G.~Di Fratta, A.~Fiorenza,
	\emph{A unified divergent approach to
		Hardy--Poincar\'e inequalities in classical
		and variable Sobolev spaces},
	J.\ Funct.\ Anal.\
	\textbf{283} (2022), no.~5, Paper No.~109552.
	
	\bibitem{DiFrattaFiorenza2025}
	G.~Di Fratta, A.~Fiorenza,
	\emph{A generalized radial integration by parts formula
		and its applications to
		Caffarelli--Kohn--Nirenberg inequalities},
	Anal.\ Math.\ Phys.\
	\textbf{15} (2025), Paper No.~62.
	doi:10.1007/s13324-025-01060-y.
	
		\bibitem{Fiorenza2021}
	A.~Fiorenza,
	\emph{Modulars from Nakano onwards},
	Constructive \ Math.\ Anal.\
	\textbf{4} (2021), no.~2, 145--178.

	
	\bibitem{EvansGariepy2015}
	L.~C.~Evans, R.~F.~Gariepy,
	\emph{Measure Theory and Fine Properties of Functions},
	revised edition, Textbooks in Mathematics,
	CRC Press, Boca Raton, FL, 2015.
	
	\bibitem{FanZhangZhao2005}
	X.~Fan, Q.~Zhang, D.~Zhao,
	\emph{Eigenvalues of $p(x)$-Laplacian Dirichlet problem},
	J.\ Math.\ Anal.\ Appl.\
	\textbf{302} (2005), no.~2, 306--317.
	
	\bibitem{GilbargTrudinger2001}
	D.~Gilbarg, N.~S.~Trudinger,
	\emph{Elliptic Partial Differential Equations of Second Order},
	reprint of the 1998 edition, Classics in Mathematics,
	Springer, Berlin, 2001.
	
	\bibitem{GrafakosClassical2014}
	L.~Grafakos,
	\emph{Classical Fourier Analysis}, 3rd edition,
	Graduate Texts in Mathematics \textbf{249},
	Springer, New York, 2014.
	
	\bibitem{HurriSyrjanenVahakangas}
	R.~Hurri-Syrj\"anen, A.~V.~V\"ah\"akangas,
	\emph{On fractional Poincar\'e inequalities},
	J.\ Anal.\ Math.\
	\textbf{120} (2013), 85--104.
	
	\bibitem{KaufmannRossiVidal2017}
	U.~Kaufmann, J.~D.~Rossi, R.~Vidal,
	\emph{Fractional Sobolev spaces with variable exponents
		and fractional $p(x)$-Laplacians},
	Electron.\ J.\ Qual.\ Theory Differ.\ Equ.\
	(2017), no.~76, 1--10.
	
	\bibitem{Maeda2008}
	F.-Y.~Maeda,
	\emph{Poincar\'e type inequalities for variable exponents},
	JIPAM J.\ Inequal.\ Pure Appl.\ Math.\
	\textbf{9} (2008), no.~3, Article 68, 5 pp.
	
	\bibitem{Ponce2004}
	A.~C.~Ponce,
	\emph{An estimate in the spirit of Poincar\'e's inequality},
	J.\ Eur.\ Math.\ Soc.\ (JEMS)
	\textbf{6} (2004), no.~1, 1--15.
	
	\bibitem{Ziemer1989}
	W.~P.~Ziemer,
	\emph{Weakly Differentiable Functions:
		Sobolev Spaces and Functions of Bounded Variation},
	Graduate Texts in Mathematics \textbf{120},
	Springer, New York, 1989.
	
\end{thebibliography}
\end{document}